\theoremstyle{plain}
   \newtheorem{teo}{Theorem}
   \newtheorem{lema}[teo]{Lemma}
\theoremstyle{definition}
\theoremstyle{remark}
 \newtheorem{obs}{Remark}
\numberwithin{equation}{section}
\newcommand{\R}{\mathbb{R}} 
\newcommand{\N}{\mathbb{N}} 
\newcommand{\norm}[1]{\|#1\|} 
\newcommand{\supp}{\mathrm{supp}}
\begin{document}

\title[Mixed weak estimates for fractional integral operators]{Mixed weak estimates of Sawyer type for fractional integrals and some related operators}

\author[F. Berra]{Fabio Berra}
\address{CONICET and Departamento de Matem\'{a}tica (FIQ-UNL),  Santa Fe, Argentina.}
\email{fberra@santafe-conicet.gov.ar}

\author[M. Carena]{Marilina Carena}
\address{CONICET (FIQ-UNL) and Departamento de Matem\'{a}tica (FHUC-UNL),  Santa Fe, Argentina.}
\email{marilcarena@gmail.com}

\author[G. Pradolini]{Gladis pradolini}
\address{CONICET and Departamento de Matem\'{a}tica (FIQ-UNL),  Santa Fe, Argentina.}
 \email{gladis.pradolini@gmail.com}

\thanks{The authors were supported by CONICET, UNL and ANPCyT}

\subjclass[2010]{42B20, 42B25}

\keywords{fractional integral operators, fractional maximal operator, Muckenhoupt weights}


\begin{abstract}
We prove mixed weak estimates of Sawyer type for fractional operators. More precisely, let $\mathcal{T}$ be either the maximal fractional function $M_\gamma$ or the fractional integral operator $I_\gamma$, $0<\gamma<n$, $1\leq p<n/\gamma$ and $1/q=1/p-\gamma/n$. If $u,v^{q/p}\in A_1$ or if $uv^{-q/{p'}}\in A_1$ and $v^q\in A_\infty(uv^{-q/{p'}})$ then we obtain that the estimate
\begin{equation*}
uv^{q/p}\left(\left\{x\in \R^n: \frac{|\mathcal{T}(fv)(x)|}{v(x)}>t\right\}\right)^{1/q}\leq \frac{C}{t}\left(\int_{\R^n}|f(x)|^pu(x)^{p/q}v(x)\,dx\right)^{1/p},
\end{equation*}
holds for every positive $t$ and every bounded function with compact support.

As an important application of the results above we further more exhibe mixed weak estimates for commutators of Calder\'on-Zygmund singular integral and fractional integral operators when the symbol $b$ is in the class Lipschitz-$\delta$, $0<\delta\leq 1$.
\end{abstract}

\maketitle

\thispagestyle{empty}

\section*{Introduction}

The first basic and deep result of the Muckenhoupt theory is the equivalence of the $A_p$ condition  of a weight $w$ with the boundedness
of the Hardy-Littlewood maximal operator $M$ on the associated weighted $L^p(w)$ space.
In other words, $w\in A_p$ if and only if
\begin{equation}\label{teo_de_muckenhoupt}
 \int_{\R^n}Mf(x)^pw(x)\,dx\leq C\int_{\R^n}|f(x)|^pw(x)\,dx.
\end{equation}
In \cite{Sawyer}, Sawyer noted that the fact that $w\in A_p$ implies $\eqref{teo_de_muckenhoupt}$ can be obtained from the factorization theorem of P. Jones (see \cite{jones}) together with a mixed inequality of an auxiliary operator. Precisely, given $w\in A_p$,  use the factorization theorem to write $w = uv^{1-p}$
with $u$ and $v$ in $A_1$, and consider the operator $S$ defined by
\[S(f)(x)=\frac{M(fv)(x)}{v(x)}.\]
It is not difficult  to see that the inequality
\begin{equation}\label{acotacion_de_S}
\int_{\R^n}|S(f)(x)|^pu(x)v(x)\,dx\leq C\int_{\R^n}|f(x)|^pu(x)v(x)\,dx
\end{equation}
is equivalent to \eqref{teo_de_muckenhoupt}.
%
That is, the strong $(p,p)$ type of $S$ with respect to the measure $\mu$ defined by $d\mu(x)=u(x)v(x)\,dx$ is equivalent to the weighted strong $(p,p)$ type of $M$. Since $S$  is bounded on $L^\infty(uv)$, then  \eqref{teo_de_muckenhoupt} will follow from interpolation techniques provided that the weak $(1,1)$ type of $S$, with respect to the measure $uv$, is achieved. Concretely, it is enough to show that the inequality
%
%
\[uv\left(\left\{x\in \R^n: S(f)(x)>t\right\}\right)\leq\frac{C}{t}\int_{\R^n}|f(x)|u(x)v(x)\,dx,\]
or equivalently
\begin{equation}\label{eq: mixed}
uv\left(\left\{x\in \R^n: \frac{M(fv)(x)}{v(x)}>t\right\}\right)\leq\frac{C}{t}\int_{\R^n}|f(x)|u(x)v(x)\,dx.
\end{equation}
holds for every $t>0$

The inequality \eqref{eq: mixed} was proved by
Sawyer in \cite{Sawyer} in $\mathbb R$  with
$u,v\in A_1$. The author also conjectured that the inequality  holds if $T$ is
the Hilbert transform. This conjecture was later proved in  \cite{CU-M-P} in a more general setting of Calder\'on-Zygmund operators as well as the Hardy-Littlewood maximal operator in $\R^n$. The conditions on the weights considered in \cite{CU-M-P} are not only $u,v \in A_1$ but also $u\in A_1$ and $v\in A_\infty(u)$.
For the latter class of weights, in  \cite{bcp} a similar result is proved for singular integral operators  $T$  whose kernels satisfy a $L^{\varphi}$-H\"{o}rmander property  for a given Young function $\varphi$. In the same article, mixed weak type inequalities for higher order commutators of $T$ with $BMO$ symbols were also proved.

The motivation of Sawyer impulses us to look for an adequate mixed inequality for other operators, such as the fractional maximal or the fractional integral operators. By following the ideas described above for the Hardy-Littlewood maximal operator $M$, we look for weighted mixed inequalities that allow us to deduce the boundedness of the fractional maximal operator $M_\gamma$, which was proved by Muckenhoupt and Wheeden in \cite{Muckenhoup-Wheeden-Int-Fraccionaria}. 
Concretely, they proved that if $0<\gamma<n$, $1<p<n/\gamma$, $1/q=1/p-\gamma/n$ and $w\in A_{p,q}$ (see section below) then the inequality

\begin{equation}\label{eq muck and wheeden int frac}
\left(\int_{\R^n}M_\gamma(f)(x)^qw(x)^q\,dx\right)^{1/q}\leq C\left(\int_{\R^n}|f(x)|^pw(x)^p\,dx\right)^{1/p}
\end{equation}
holds.



%

\medskip

Let $0<\gamma<n$, $1<p<n/\gamma$ and $1/q=1/p-\gamma/n$. Given  $w\in A_{p,q}$ we factorize  $w^q=uv^{1-r}$ with $u,v\in A_1$ and $r=1+q/{p'}$, and  we consider the operator
\[S(f)(x)=\frac{M_\gamma(fv)(x)}{v(x)}.\]
Let $z$ be the weight defined by $z(x)=u(x)^{1/q}v(x)^{1/p}$.
 We shall see that if $S$ is bounded from $L^p(z^p)$ to $L^q(z^q)$, then \eqref{eq muck and wheeden int frac} holds. Indeed, assume that
 \begin{equation}\label{acotacion_S_fraccionaria}
\left(\int_{\R^n}S(f)(x)^qz(x)^q\,dx\right)^{1/q}\leq C\left(\int_{\R^n}|f(x)|^pz(x)^p\,dx\right)^{1/p}.
\end{equation}
Then
\begin{align*}
\left(\int_{\R^n}M_\gamma(f)(x)^qw(x)^q\,dx\right)^{1/q}&=\left(\int_{\R^n}\frac{M_{\gamma}(fv^{-1}v)(x)^q}{v^{ q}}\,uv^{1-r}v^{ q}\,dx\right)^{1/q}\\
&=\left(\int_{\R^n}S(fv^{-1})^qz^q\,dx\right)^{1/q}\\
&\leq C\left(\int_{\R^n}|f(x)|^pv^{- p}z^p\,dx\right)^{1/p}\\
&=C\left(\int_{\R^n}|f(x)|^pw(x)^p\,dx\right)^{1/p}.
\end{align*}

Actually, \eqref{acotacion_S_fraccionaria} is equivalent to \eqref{eq muck and wheeden int frac}. Indeed, if \eqref{eq muck and wheeden int frac} holds then
\begin{align*}
\left(\int_{\R^n}S(f)(x)^qz(x)^q\,dx\right)^{1/q}&=\left(\int_{\R^n}M_\gamma(fv)(x)^qz(x)^qv(x)^{-q}\,dx\right)^{1/q}\\
&=\left(\int_{\R^n}M_\gamma(fv)(x)^qw(x)^q\,dx\right)^{1/q}\\
&\leq C\left(\int_{\R^n}f(x)^pv(x)^pw(x)^p\,dx\right)^{1/p}\\
&=C\left(\int_{\R^n}f(x)^pz(x)^p\,dx\right)^{1/p}.
\end{align*}

As in \cite{Muckenhoup-Wheeden-Int-Fraccionaria}, inequality \eqref{acotacion_S_fraccionaria} can be obtained if we prove that
\begin{equation*}
z^{q}\left(\left\{x\in \R^n: \frac{M_\gamma(fv)(x)}{v(x)}>t\right\}\right)^{1/q}\leq \frac{C}{t}\left(\int_{\R^n}|f(x)|^pz^{p}(x)\,dx\right)^{1/p},
\end{equation*}
for every $0<\gamma<n$, $1\leq p<n/\gamma$ and  $q$ satisfying $1/q=1/p-\gamma/n$, or equivalently
\begin{equation}\label{eq: eq main}
uv^{q/p}\left(\left\{x\in \R^n: \frac{M_\gamma(fv)(x)}{v(x)}>t\right\}\right)^{1/q}\leq \frac{C}{t}\left(\int_{\R^n}|f(x)|^pu^{p/q}(x)v(x)\,dx\right)^{1/p}.
\end{equation}

\medskip

We are now in a position to give our main results.
\begin{teo}\label{teo_main_Mgamma}
Let $0<\gamma<n$, $1\leq p<n/\gamma$ and $q$ satisfying $1/q=1/p-\gamma/n$. If $u,v$ are weights such that $u,v^{q/p}\in A_1$ or $uv^{-q/{p'}}\in A_1$ and $v^q\in A_\infty(uv^{-q/{p'}})$, then there exists a positive constant $C$ such that for every $t>0$
\[uv^{q/p}\left(\left\{x\in \R^n: \frac{M_\gamma(fv)(x)}{v(x)}>t\right\}\right)^{1/q}\leq \frac{C}{t}\left(\int_{\R^n}|f(x)|^pu(x)^{p/q}v(x)\,dx\right)^{1/p}.\]
\end{teo}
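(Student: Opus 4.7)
The argument adapts the scheme of Sawyer \cite{Sawyer} (as developed in \cite{CU-M-P, bcp}) to the fractional setting. After standard reductions---take $f \geq 0$ bounded with compact support, normalize $t=1$ by homogeneity, and replace $M_\gamma$ by its dyadic version on finitely many shifted grids---the proof rests on a Calder\'on--Zygmund-type covering of the level set $E = \{M_\gamma(fv) > v\}$ together with a per-cube weighted-measure estimate.

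For each $x \in E$, select a cube $Q_x \ni x$ with $|Q_x|^{\gamma/n - 1}\int_{Q_x} fv > v(x)$. The weight hypotheses then convert the pointwise value $v(x)$ into an average over $Q_x$. In the first case, since $q/p \geq 1$, Jensen's inequality forces $v^{q/p} \in A_1 \Rightarrow v \in A_1$, so $v(x) \gtrsim v(Q_x)/|Q_x|$ a.e. In the second case, the assumptions $\sigma := uv^{-q/p'} \in A_1$ and $v^q \in A_\infty(\sigma)$ deliver the analogous reverse-averaging for $v$ relative to the measure $\sigma\,dx$. A Vitali extraction then yields a disjoint family $\{Q_j\}$ with $E \subseteq \bigcup_j 3Q_j$ and
\[
v(Q_j) \,\lesssim\, |Q_j|^{\gamma/n} \int_{Q_j} fv.
\]

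The core step is to bound $\sum_j uv^{q/p}(Q_j)$. In case 1, reverse H\"older for the $A_1$ weights $u$ and $v^{q/p}$, combined with H\"older's inequality, gives $uv^{q/p}(Q_j) \lesssim |Q_j|\, u_{Q_j} (v^{q/p})_{Q_j}$, which is controlled via the $A_1$ infima. In case 2, the algebraic identity $q/p = q - q/p'$ yields the factorization $uv^{q/p} = \sigma \cdot v^q$, so $uv^{q/p}(Q_j) = \sigma(Q_j)\,(v^q)_{Q_j,\sigma}$, and the $A_\infty(\sigma)$ reverse H\"older for $v^q$ (together with the $A_1$ character of $\sigma$) converts this into a usable bound. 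One then applies H\"older's inequality with exponents $(p,p')$ to $\int_{Q_j} fv$, producing the factor $\bigl(\int_{Q_j} |f|^p u^{p/q} v\bigr)^{1/p}$; the dual factor $\bigl(\int_{Q_j} u^{-p'/q} v\bigr)^{1/p'}$ is absorbed using the weight conditions (it is a power of $\sigma$ in case 2 and is controlled pointwise by the $A_1$ condition in case 1). Summing over the disjoint $\{Q_j\}$ and raising to the $1/q$ power yields the claim. The principal obstacle is case 2: one must carefully match the reverse H\"older exponent furnished by $v^q \in A_\infty(\sigma)$ with the fractional exponents $p,q$ so that all the H\"older steps close uniformly, and handle the endpoint $p=1$---where $p'=\infty$, $v^{-q/p'}$ is interpreted as $1$, and the second hypothesis degenerates to $u \in A_1$ with $v^q \in A_\infty(u)$---within the same argument.
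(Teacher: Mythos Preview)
Your plan departs substantially from the paper's proof and contains a real gap in the ``core step.'' The paper does \emph{not} run a fresh Calder\'on--Zygmund covering for $M_\gamma$; it reduces everything to the known mixed inequality for the Hardy--Littlewood maximal function (Theorem~\ref{mixed_para_M}, from \cite{CU-M-P}) via pointwise estimates. One is Remark~\ref{rem puntual}: for nonnegative $f_0$ and $w>0$,
\[
M_\gamma(f_0w^{-1})(x)\le M\bigl(f_0^{p} w^{-q}\bigr)(x)^{1/q}\Bigl(\int_{\R^n} f_0^{p}\Bigr)^{\gamma/n};
\]
the other is Lemma~\ref{lema_Mgamma_Mgamma0}: for $v\in A_1$,
\[
\frac{M_\gamma(fv)(x)}{v(x)}\le [v]_{A_1}^{1/p'}\Bigl(\frac{M_{p\gamma}(f^{p}v)(x)}{v(x)}\Bigr)^{1/p}.
\]
With the right choices of $w$ and $f_0$ in each case (for the second hypothesis, $w=u^{1/q}v^{-1/p'}$ and $f_0=fvw$), the superlevel set of $M_\gamma(fv)/v$ is contained in one of the form $\{M(g\,\tilde v)/\tilde v>s\}$ with weights $(\tilde u,\tilde v)$ satisfying exactly the hypotheses of Theorem~\ref{mixed_para_M}, and the exponents collapse to the desired right-hand side. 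No cube selection, no Vitali, no per-cube estimates.

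The concrete problem with your outline is the assertion, in case~1, that reverse H\"older plus H\"older give $uv^{q/p}(Q_j)\lesssim |Q_j|\,u_{Q_j}\,(v^{q/p})_{Q_j}$. This is false for general $A_1$ weights: take $n=1$ and $u=v^{q/p}=|x|^{-1+\varepsilon}$ with small $\varepsilon>0$; each factor is in $A_1$, yet the product $|x|^{-2+2\varepsilon}$ is not even locally integrable near $0$ for $\varepsilon<1/2$. More generally, the product of two $A_1$ weights need not lie in $A_\infty$, so it need not be doubling, which already blocks the step $uv^{q/p}(3Q_j)\lesssim uv^{q/p}(Q_j)$, and no uniform choice of conjugate reverse-H\"older exponents is available. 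Your case~2 sketch is closer to viable---the factorization $uv^{q/p}=\sigma\cdot v^{q}$ with $\sigma=uv^{-q/p'}\in A_1$ and $v^{q}\in A_\infty(\sigma)$ does force $uv^{q/p}\in A_\infty$---but the ``absorption'' of the dual factor $\bigl(\int_{Q_j}u^{-p'/q}v\bigr)^{1/p'}=\bigl(\int_{Q_j}\sigma^{-p'/q}\bigr)^{1/p'}$ is not at all routine from $\sigma\in A_1$, and even for $M$ the $A_1\times A_\infty(u)$ case in \cite{CU-M-P} is obtained by extrapolation rather than by a direct covering. At best your plan would amount to reproving the hard input (Theorem~\ref{mixed_para_M}) in a fractional disguise; the paper instead uses it as a black box.
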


\begin{obs}
Let us point out that if $\gamma=0$ we get $p=q$, and in the particular case $p=q=1$ this result has already proved in \cite{CU-M-P}. Also, notice that the conditions on the weights coincide with the ones in that theorem for this case. For this reason, the theorem above can be seen as a generalization of the conditions of Theorem~\ref{mixed_para_M} not only for the case $p=1$ but also for other values of $p$.

\end{obs}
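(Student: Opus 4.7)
The remark asserts three things that require checking: that $\gamma=0$ forces $q=p$ in the relation $1/q=1/p-\gamma/n$; that in the subcase $p=q=1$ the conclusion of Theorem \ref{teo_main_Mgamma} coincides with inequality \eqref{eq: mixed}; and that in this subcase the two alternative hypotheses on the pair $(u,v)$ coincide with the hypotheses under which \eqref{eq: mixed} was established in \cite{CU-M-P}. The plan is to verify these three statements by direct substitution into the data of Theorem \ref{teo_main_Mgamma}.

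The first claim is immediate from $1/q=1/p-\gamma/n$. For the second, set $\gamma=0$ and $p=q=1$: the operator $M_\gamma$ collapses to the Hardy--Littlewood maximal operator $M$, while the exponents $q/p$ and $p/q$ both equal $1$. Hence the measure $uv^{q/p}\,dx$ on the left-hand side becomes $uv\,dx$, the weight $u^{p/q}v$ on the right-hand side becomes $uv$, and the conclusion of Theorem \ref{teo_main_Mgamma} reduces literally to \eqref{eq: mixed}.

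For the third claim, the only delicate point---and the main (and essentially only) obstacle---is the interpretation of $q/p'$ when $p=1$: since $p'=\infty$, $q/p'$ is read as $0$, so that $v^{-q/p'}\equiv 1$ and $v^q=v$. Under this convention the hypothesis ``$u,\,v^{q/p}\in A_1$'' becomes ``$u,v\in A_1$'', and the hypothesis ``$uv^{-q/p'}\in A_1$ and $v^q\in A_\infty(uv^{-q/p'})$'' becomes ``$u\in A_1$ and $v\in A_\infty(u)$''. These are precisely the two sets of hypotheses used in \cite{CU-M-P} to prove the mixed weak-type estimate \eqref{eq: mixed} for $M$, confirming the comparison with Theorem~\ref{mixed_para_M}. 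With this consistency at the boundary $p=1$ in place, Theorem \ref{teo_main_Mgamma} is manifestly a simultaneous generalization, in two independent directions, of the case $p=q=1$: by allowing $0<\gamma<n$, and by admitting $p>1$ with the natural scaled weight conditions.
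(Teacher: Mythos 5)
Your verification is correct and is exactly what the remark amounts to: substituting $\gamma=0$, $p=q=1$ (with the convention $p'=\infty$, so $q/p'=0$ and $v^{-q/p'}\equiv 1$) reduces both the conclusion and the two weight hypotheses of Theorem~\ref{teo_main_Mgamma} to the statement and hypotheses of Theorem~\ref{mixed_para_M} from \cite{CU-M-P}. The paper offers no separate argument for this remark, so your direct-substitution check matches its intent; nothing further is needed.
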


The next theorem establishes a similar estimate for the fractional integral operator $I_\gamma$.

\begin{teo}\label{teo_main_Tgamma}
Let $0<\gamma<n$, $1\leq p<n/\gamma$ and $q$ satisfying $1/q=1/p-\gamma/n$. If $u,v$ are weights such that $u,v^{q/p}\in A_1$ or $uv^{-q/{p'}}\in A_1$ and $v^q\in A_\infty(uv^{-q/{p'}})$, then there exists a positive constant $C$ such that for every $t>0$
\[uv^{q/p}\left(\left\{x\in \R^n: \frac{I_\gamma(fv)(x)}{v(x)}>t\right\}\right)^{1/q}\leq \frac{C}{t}\left(\int_{\R^n}|f(x)|^pu(x)^{p/q}v(x)\,dx\right)^{1/p}.\]
\end{teo}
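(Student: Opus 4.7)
The plan is to reduce the estimate for the fractional integral $I_\gamma$ to the estimate for the fractional maximal operator $M_\gamma$ already obtained in Theorem~\ref{teo_main_Mgamma}. The reduction is through a Coifman-Fefferman type inequality adapted to the mixed setting.

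The first step is to verify that the weight $\omega:=uv^{q/p}$ belongs to $A_\infty$ under either hypothesis. In the second case this is a consequence of the factorization $\omega=(uv^{-q/p'})\cdot v^q$: a product of an $A_1$ weight with an $A_\infty$ weight (measured with respect to that $A_1$ weight) always lies in $A_\infty$. In the first case $u$ and $v^{q/p}$ both belong to $A_1$; using the reverse H\"older inequalities coming with $A_1$ together with an appropriate H\"older application, one shows that $\omega$ lies in some $A_r$, hence in $A_\infty$.

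The central step is a good-$\lambda$ inequality with a spatially varying threshold: for sufficiently small $\epsilon>0$,
\[\omega\left(\left\{x:\frac{I_\gamma(fv)(x)}{v(x)}>2t,\ \frac{M_\gamma(fv)(x)}{v(x)}\le \epsilon t\right\}\right)\le C\epsilon^{\delta}\,\omega\left(\left\{x:\frac{I_\gamma(fv)(x)}{v(x)}>t\right\}\right).\]
I would prove this by a Whitney decomposition of the open set $\Omega_t:=\{I_\gamma(fv)/v>t\}$ and, on each Whitney cube $Q$, a split of $I_\gamma(fv)$ into its local and global pieces (by decomposing the kernel of $I_\gamma$). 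The local piece is controlled by the hypothesis $M_\gamma(fv)/v\le\epsilon t$, while the global piece is handled via a pointwise smoothness estimate for $I_\gamma$; both steps require multiplying and dividing by $v$ in a way that is legitimate thanks to the compatibility of $v$ with $\omega$. In case~2 this compatibility is encoded by $v^q\in A_\infty(uv^{-q/p'})$, and in case~1 by $v^{q/p}\in A_1$, which allows $v$ to be essentially replaced by its infimum on each Whitney cube up to a controlled multiplicative error. The $A_\infty$ property of $\omega$ then supplies the factor $\epsilon^{\delta}$.

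The standard iteration of the good-$\lambda$, justified after truncating $f$ to keep the left-hand side finite, yields the distributional inequality
\[\omega\bigl(\{x:I_\gamma(fv)(x)/v(x)>t\}\bigr)\le C\,\omega\bigl(\{x:M_\gamma(fv)(x)/v(x)>ct\}\bigr)\]
for some $c>0$, and invoking Theorem~\ref{teo_main_Mgamma} on the right-hand side finishes the proof. The main obstacle is the good-$\lambda$ step: the classical Muckenhoupt-Wheeden argument is local and presumes a constant level, whereas here the threshold $tv(x)$ varies in space, so $v$ must be controlled pointwise on each Whitney cube. This is precisely where the hypothesis $v^{q/p}\in A_1$ (case~1) or $v^q\in A_\infty(uv^{-q/p'})$ (case~2) enters in a crucial way.
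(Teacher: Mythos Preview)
Your approach differs substantially from the paper's. The paper does not attempt a good-$\lambda$ inequality with a spatially varying threshold; instead it invokes an extrapolation theorem of Cruz-Uribe, Martell and P\'erez (Theorem~\ref{teo_extrapolacion_carlos}). The scheme is: (i)~by the Coifman-type inequality of Theorem~\ref{teo_muckenhoupt_Tgamma_Mgamma}, for every $w\in A_\infty$ and some $p_0$ one has $\int I_\gamma(fv)^{p_0}w\le C\int M_\gamma(fv)^{p_0}w$; (ii)~extrapolation transfers this to the mixed weak norm, yielding
\[
\norm{I_\gamma(fv)v^{-1}}_{L^{q,\infty}(uv^{q/p})}\le C\,\norm{M_\gamma(fv)v^{-1}}_{L^{q,\infty}(uv^{q/p})};
\]
(iii)~Theorem~\ref{teo_main_Mgamma} closes the argument. (In the case $u,v^{q/p}\in A_1$ with $p>1$ a small preliminary reduction, Theorem~\ref{teo_norma_lqinfty_para_Tgamma}, is used so that the $A_\infty$ weight appearing in~(i) absorbs the factor $v^{-q/p'}$.) The point of routing through extrapolation is precisely that it handles the division by $v$ and the passage to $L^{q,\infty}(uv^{q/p})$ \emph{without} ever needing pointwise control of $v$ on Whitney cubes.

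Your proposal has a genuine gap at exactly the step you flag as the main obstacle. You assert that $v^{q/p}\in A_1$ ``allows $v$ to be essentially replaced by its infimum on each Whitney cube up to a controlled multiplicative error.'' That is not what $A_1$ provides: the $A_1$ condition bounds the \emph{average} of a weight by its infimum, not its supremum, and an $A_1$ weight can have $\sup_Q v/\inf_Q v$ arbitrarily large. Thus the comparison $v(x_0)\lesssim v(x)$ between a point $x_0$ just outside a Whitney cube (where $I_\gamma(fv)(x_0)\le t\,v(x_0)$) and a point $x$ inside (where one needs the threshold $t\,v(x)$) is simply unavailable, and the control of the ``far'' piece of $I_\gamma$ breaks down. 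Under the second hypothesis the situation is worse: $v^q\in A_\infty(uv^{-q/p'})$ carries no pointwise information about $v$ whatsoever. Without such control the varying-threshold good-$\lambda$ does not go through as you describe; the extrapolation route is the standard device for circumventing this difficulty, and it is what the paper uses.
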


\begin{obs}
If we set $\gamma=0$ and consider the case $p=q=1$, then this theorem was proved in \cite{CU-M-P} with $I_\gamma$ replaced with a Calder\'on-Zygmund operator $T$.
\end{obs}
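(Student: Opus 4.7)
This final statement is a literature remark rather than a new claim, so strictly speaking there is no proof to plan. The ``proposal'' reduces to verifying that the degenerate parameter choice $\gamma=0$, $p=q=1$ in Theorem~\ref{teo_main_Tgamma}, together with the substitution of $I_\gamma$ by a Calder\'on--Zygmund operator $T$, matches exactly the hypotheses and conclusion of the corresponding result in \cite{CU-M-P}; no further argument is needed beyond inspection.

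Concretely, the scaling relation $1/q = 1/p - \gamma/n$ is automatically fulfilled in this specialization. Since $q/p = 1$, the first weight hypothesis $u, v^{q/p}\in A_1$ reads simply $u, v \in A_1$. For the second hypothesis, $q/p' = 0$ (because $p' = \infty$), so $uv^{-q/p'} = u$ and $v^q = v$; thus $uv^{-q/p'}\in A_1$ together with $v^q\in A_\infty(uv^{-q/p'})$ becomes $u\in A_1$ with $v\in A_\infty(u)$. Likewise $uv^{q/p}=uv$ and $u^{p/q}v = uv$, so that the claimed inequality collapses to
\[
uv\!\left(\left\{x\in\R^n:\frac{|T(fv)(x)|}{v(x)}>t\right\}\right)\le \frac{C}{t}\int_{\R^n}|f(x)|\,u(x)v(x)\,dx,
\]
assumed to hold under either $u, v \in A_1$ or $u\in A_1$ and $v\in A_\infty(u)$. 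These are precisely the two pairs of weight classes, and precisely the mixed weak $(1,1)$ estimate for Calder\'on--Zygmund operators, established in \cite{CU-M-P} (and recalled in the introduction of the present paper).

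The only subtlety worth flagging is that $I_0$ is not itself a well-defined operator, since the kernel $|x-y|^{-n}$ fails to be locally integrable; this is exactly why the remark is phrased as a substitution of $I_\gamma$ by a singular integral $T$ rather than as a literal evaluation of $I_\gamma$ at $\gamma=0$. Consequently no limiting argument is required, and there is no genuine obstacle: the remark is verified by matching parameters against the known statement in \cite{CU-M-P}.
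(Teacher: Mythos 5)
Your verification is correct and coincides with what the paper intends: the remark is a pure parameter-matching observation, and your specialization ($q/p=1$, $p'=\infty$ so $uv^{-q/p'}=u$, $v^q=v$, $uv^{q/p}=u^{p/q}v=uv$) shows the hypotheses and conclusion collapse exactly to Theorem~\ref{teo1.7} from \cite{CU-M-P}. Your observation that $I_0$ is not a well-defined operator, which is why the statement speaks of replacing $I_\gamma$ by a Calder\'on--Zygmund operator $T$, is an accurate reading of the remark and requires no further argument.
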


We devote Section~\ref{section:main-fractional maximal} to prove Theorem~\ref{teo_main_Mgamma}. In Section~\ref{section:main-fractional integral} we consider the fractional integral operator and prove Theorem~\ref{teo_main_Tgamma}. Finally, in Section~\ref{section: applications} we apply the obtained results in order to prove mixed weak inequalities of Sawyer type for commutators of Calder\'on-Zygmund and fractional integral operators, both with Lipschitz symbol.

\section{Preliminaries, definitions and auxiliary lemmas}\label{seccion_preliminares_y_def}
Given $0<\gamma<n$, the \emph{fractional maximal operator} $M_\gamma$ is defined for $f\in L^{1}_\textrm{loc}$ by
\[M_\gamma f(x)=\sup_{Q\ni x} \frac{1}{|Q|^{1-\gamma/n}}\int_Q|f(y)|\,dy,\]
where the supremum is taken over all cubes containing $x$. The \emph{fractional integral operator} $I_\gamma$ is defined by
\[I_\gamma f(x)=\int_{\R^n}\frac{f(y)}{|x-y|^{n-\gamma}}\,dy.\]
It is well known that the boundedness of $I_\gamma$ can be obtained from the boundedness of $M_\gamma$, due to the following Coifman type inequality proved in \cite{Muckenhoup-Wheeden-Int-Fraccionaria}.

\begin{teo}[\cite{Muckenhoup-Wheeden-Int-Fraccionaria}]\label{teo_muckenhoupt_Tgamma_Mgamma}
Let $0< \gamma<n$ and $0<q<\infty$. For every weight $w\in A_\infty$ we have that
\[\int_{\R^n}|I_\gamma(f)(x)|^qw(x)\,dx\leq C\int_{\R^n}M_{\gamma}(f)(x)^qw(x)\,dx.\]
\end{teo}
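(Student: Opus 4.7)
The plan is to prove this via a Coifman--Fefferman style good-$\lambda$ argument, following the classical strategy of Muckenhoupt and Wheeden. The crucial step is the Lebesgue-measure good-$\lambda$ inequality
\[\bigl|\{x \in \R^n : |I_\gamma f(x)| > 2\lambda,\ M_\gamma f(x) \le \epsilon \lambda\}\bigr| \le C \epsilon^{n/(n-\gamma)} \bigl|\{x \in \R^n : |I_\gamma f(x)| > \lambda\}\bigr|,\]
valid for every $\lambda > 0$ and every sufficiently small $\epsilon > 0$; together with the usual $A_\infty$ passage from Lebesgue to $w$ and integration against $q \lambda^{q-1}\,d\lambda$, this yields the theorem.

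To establish the good-$\lambda$ inequality I would first reduce to $f$ bounded with compact support so that $\Omega_\lambda := \{|I_\gamma f| > \lambda\}$ has finite Lebesgue measure (by the Hardy--Littlewood--Sobolev weak-type $(1, n/(n-\gamma))$ estimate for $I_\gamma$), and decompose $\Omega_\lambda$ into a Whitney family $\{Q_j\}$. Fix a Whitney cube $Q_j$ and assume there exists $x_0 \in Q_j$ with $M_\gamma f(x_0) \le \epsilon \lambda$ (otherwise the set in question does not meet $Q_j$). Pick a point $y_j$ in a fixed dilate of $Q_j$ that lies outside $\Omega_\lambda$, so $|I_\gamma f(y_j)| \le \lambda$. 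Splitting $f = f\chi_{3Q_j} + f\chi_{(3Q_j)^c}$ and using the H\"older continuity of the Riesz kernel $|x-y|^{\gamma - n}$ in an annular decomposition, one obtains for every $x \in Q_j$
\[\bigl|I_\gamma(f\chi_{(3Q_j)^c})(x) - I_\gamma(f\chi_{(3Q_j)^c})(y_j)\bigr| \le C M_\gamma f(x_0) \le C \epsilon \lambda,\]
since $x_0$ belongs to every dilate $2^k Q_j$ that appears in the annular sum. Hence $|I_\gamma f(x)| > 2\lambda$ forces the local piece $|I_\gamma(f\chi_{3Q_j})(x)|$ (together with $|I_\gamma(f\chi_{3Q_j})(y_j)|$, controlled analogously) to exceed a fixed multiple of $\lambda$. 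The HLS weak-type bound applied to $f\chi_{3Q_j}$, combined with $\|f\chi_{3Q_j}\|_1 \le |3Q_j|^{1 - \gamma/n} M_\gamma f(x_0) \le C|Q_j|^{1-\gamma/n}\epsilon\lambda$, then gives
\[\bigl|\{x \in Q_j : |I_\gamma f(x)| > 2\lambda,\ M_\gamma f(x) \le \epsilon \lambda\}\bigr| \le C \epsilon^{n/(n-\gamma)} |Q_j|,\]
and summation over $j$ proves the claim.

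The passage to $w \in A_\infty$ uses the standard characterization: there exist $C, \delta > 0$ such that $w(E) \le C(|E|/|Q|)^\delta w(Q)$ for every measurable $E \subset Q$. Applied cube-by-cube and summed, this yields $w(\{|I_\gamma f| > 2\lambda,\ M_\gamma f \le \epsilon \lambda\}) \le C \epsilon^{\delta n/(n-\gamma)} w(\Omega_\lambda)$. Multiplying by $q \lambda^{q-1}$ and integrating in $\lambda$ produces
\[\int_{\R^n} |I_\gamma f|^q w \, dx \le C \epsilon^{\delta n/(n-\gamma)} \int_{\R^n} |I_\gamma f|^q w \, dx + C \epsilon^{-q} \int_{\R^n} (M_\gamma f)^q w \, dx,\]
and choosing $\epsilon$ so small that $C \epsilon^{\delta n/(n-\gamma)} < 1/2$ absorbs the first term and completes the proof. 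The main obstacle is the good-$\lambda$ inequality itself: the annular/H\"older estimate for the far piece of the Riesz kernel is the only truly non-routine computation, and the final absorption step requires a preliminary truncation argument guaranteeing the a priori finiteness of $\int |I_\gamma f|^q w$ on a dense subclass.
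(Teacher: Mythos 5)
The paper offers no proof of this statement: it is quoted directly from Muckenhoupt--Wheeden, so there is nothing internal to compare against. Your sketch reconstructs the standard good-$\lambda$ proof of that cited result, and its architecture is the right one: Whitney decomposition of $\Omega_\lambda=\{|I_\gamma f|>\lambda\}$, a local/far splitting on each Whitney cube with the far part controlled by $M_\gamma f(x_0)$ through the smoothness of the kernel, the weak $(1,n/(n-\gamma))$ bound applied to $f\chi_{3Q_j}$, the $A_\infty$ condition $w(E)\le C(|E|/|Q|)^\delta w(Q)$ applied cube by cube, and integration in $\lambda$ with absorption.

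There is, however, one step that does not work as written: the clause that $|I_\gamma(f\chi_{3Q_j})(y_j)|$ is ``controlled analogously.'' The point $y_j$ is only known to lie in a fixed dilate of $Q_j$, so it may sit inside or right next to $3Q_j$, and you have no pointwise information at $y_j$ (you control $M_\gamma f(x_0)$ for some $x_0\in Q_j$, not $M_\gamma f(y_j)$); the quantity $\int_{3Q_j}|f(z)|\,|y_j-z|^{\gamma-n}\,dz$ can be arbitrarily large under your hypotheses, so this term cannot be absorbed into $C\epsilon\lambda$. The standard repair is to reduce at the outset to $f\ge 0$, which is legitimate since $|I_\gamma f|\le I_\gamma|f|$ and $M_\gamma f=M_\gamma|f|$: then positivity gives $I_\gamma(f\chi_{(3Q_j)^c})(y_j)\le I_\gamma f(y_j)\le\lambda$, the local piece at $y_j$ never needs to be estimated, and for $x\in Q_j$ with $I_\gamma f(x)>2\lambda$ one concludes $I_\gamma(f\chi_{3Q_j})(x)\ge\lambda(1-C\epsilon)\ge\lambda/2$, after which the weak-type bound gives the cube-by-cube estimate exactly as you state. (Positivity also makes $\Omega_\lambda$ open via lower semicontinuity, which the Whitney step needs.) As for the absorption issue you flag at the end: for $f$ bounded with compact support one has $I_\gamma|f|(x)\lesssim|x|^{\gamma-n}$ at infinity, so each $\Omega_\lambda$ is bounded and $w(\Omega_\lambda)<\infty$; integrating $\lambda$ only up to a finite level $N$, absorbing, and letting $N\to\infty$ (or simply assuming the right-hand side finite, as one may) closes the argument. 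With these corrections your proof is sound and is essentially the argument of the cited source.
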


In the proof of our main results, we shall use the following pointwise estimation for $M_\gamma$ in terms of the classical Hardy-Littewood maximal operator $M$. A proof can be found in \cite{G-P-S} for $1<p<n/\gamma$ in a more general context. For the sake of completeness we give the proof, including the case $p=1$.

\begin{lema}\label{desigualdad_puntual_Mfrac_M}
Let $0<\gamma<n$, $1\leq p<n/\gamma$, $1/q=1/p-\gamma/n$ and $s=1+q/{p'}$. Then,
\[M_\gamma(fw^{-1})(x)\leq M(f^{p/s}w^{-q/s})^{s/q}(x)\left(\int_{\R^n}f(y)^p\,dy\right)^{\gamma/n},\]
for every non-negative functions $w$  and $f\in L^p$.
\end{lema}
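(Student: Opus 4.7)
The plan is to fix a cube $Q$ containing $x$, bound the average $|Q|^{\gamma/n - 1}\int_Q f(y)w(y)^{-1}\,dy$ by an appropriate application of H\"older's inequality, and then take the supremum.

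The key algebraic observation is that the hypothesis $s = 1 + q/p'$ together with $1/q = 1/p - \gamma/n$ yields
\[
\frac{s}{q} = \frac{1}{q} + \frac{1}{p'} = 1 - \frac{\gamma}{n},
\]
so in particular $q/s = n/(n-\gamma)$. This suggests splitting the integrand as
\[
fw^{-1} = \bigl(f^{p/s}w^{-q/s}\bigr)^{s/q}\cdot f^{1 - p/q}\cdot w^{0},
\]
i.e., the exponent $a = s/q$ is precisely what kills the weight in the complementary factor. Then I would apply H\"older's inequality with exponents $r = q/s = n/(n-\gamma)$ and $r' = n/\gamma$ (both strictly greater than $1$ since $0<\gamma<n$), which gives
\[
\int_Q fw^{-1}\,dy \le \left(\int_Q f^{p/s}w^{-q/s}\,dy\right)^{s/q}\left(\int_Q f^{(1-p/q)r'}\,dy\right)^{\gamma/n}.
\]
A short check shows $(1-p/q)r' = (p\gamma/n)(n/\gamma) = p$, so the second factor is exactly $\bigl(\int_Q f^p\bigr)^{\gamma/n}$.

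Dividing by $|Q|^{1-\gamma/n} = |Q|^{s/q}$ distributes the volume factor entirely into the first bracket, producing the normalized average $\frac{1}{|Q|}\int_Q f^{p/s}w^{-q/s}\,dy$ raised to the power $s/q$, while the second factor is already bounded by $\bigl(\int_{\R^n} f^p\bigr)^{\gamma/n}$ and is independent of $Q$. Taking the supremum over cubes containing $x$ then yields the claimed pointwise inequality.

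There is no real obstacle; the whole proof is essentially one application of H\"older with the correct exponents, and the only thing worth verifying carefully is that the case $p=1$ (where $s=1$ and $p'=\infty$) still fits into the same argument, which it does since then $r = q$ and $r' = n/\gamma$ and the identity $1/q + \gamma/n = 1$ remains exactly the duality relation needed. The delicate point is recognizing the splitting exponent $a = s/q = 1-\gamma/n$ dictated by the algebraic compatibility of $p$, $q$, $s$ and $\gamma$; once this is in place the estimate falls out immediately.
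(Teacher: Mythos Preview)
Your proof is correct and is essentially the same as the paper's: both fix a cube $Q\ni x$, split $fw^{-1}$ as $(f^{p/s}w^{-q/s})^{s/q}\cdot f^{1-p/q}$ (the paper writes this via $g=f^{p/s}w^{-q/s}$ as $g^{1-\gamma/n}\cdot g^{s/p+\gamma/n-1}w^{q\gamma/n}$, which is the identical factorization), apply H\"older with exponents $n/(n-\gamma)$ and $n/\gamma$, and absorb the factor $|Q|^{1-\gamma/n}=|Q|^{s/q}$ into the first bracket before taking the supremum. The algebraic identities you record ($s/q=1-\gamma/n$ and $(1-p/q)n/\gamma=p$) are exactly those used in the paper, so there is no substantive difference in approach.
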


\begin{proof}
Set $g=f^{p/s}w^{-q/s}$, so that $fw^{-1}=g^{s/p}w^{q/p-1}$. Let us fix $x\in \R^n$ and let $Q$ be a cube containing $x$. Then, by applying  H\"{o}lder's inequality  with $n/(n-\gamma)$ and $n/\gamma$ we get
\begin{align*}
\frac{1}{|Q|^{1-\gamma/n}}\int_Q f(y)w^{-1}(y)\,dy&=\frac{1}{|Q|^{1-\gamma/n}}\int_Q g^{s/p}w^{q/p-1}\,dy\\
&=\frac{1}{|Q|^{1-\gamma/n}}\int_Q g^{1-\gamma/n}g^{s/p+\gamma/n-1}w^{q\gamma/n}\,dy\\
&\leq \left(\frac{1}{|Q|}\int_Q g(y)\,dy\right)^{1-\gamma/n}\left(\int_Q g^{(s/p+\gamma/n-1)n/\gamma}w^q\right)^{\gamma/n}\\
&\leq M(g)(x)^{s/q}\left(\int_{\R^n}f(y)^p\,dy\right)^{\gamma/n},
\end{align*}
since $1-\gamma/n=s/q$ and  $(s/p+\gamma/n-1)n/\gamma=s$, because the definition of $s$.
\end{proof}

\begin{obs}\label{rem puntual}
We want to point out that if we
apply Jensen's inequality with exponent $s>1$ in the average of $g$ in the previous proof, we get
\begin{equation}\label{eq: punctual}
M_{\gamma}(fw^{-1})(x)\leq M(f^pw^{-q})^{1/q}\left(\int_{\R^n}f^p(y)\,dy\right)^{\gamma/n}.
\end{equation}
We shall use this pointwise estimate later.
\end{obs}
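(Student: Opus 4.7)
The plan is to re-run the proof of Lemma~\ref{desigualdad_puntual_Mfrac_M} verbatim up to the point where H\"older's inequality has been applied, and then bound the first factor differently. With $g=f^{p/s}w^{-q/s}$ as in the lemma, that argument already establishes, for every cube $Q\ni x$,
\[\frac{1}{|Q|^{1-\gamma/n}}\int_Q f(y)w^{-1}(y)\,dy\leq \left(\frac{1}{|Q|}\int_Q g\,dy\right)^{1-\gamma/n}\left(\int_Q g^{s}w^{q}\,dy\right)^{\gamma/n},\]
using the identities $1-\gamma/n=s/q$ and $(s/p+\gamma/n-1)n/\gamma=s$. Since $g^{s}w^{q}=f^{p}$, the second factor is dominated by $\bigl(\int_{\R^{n}}f^{p}\bigr)^{\gamma/n}$, exactly as in the lemma.

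The key modification is to handle the first factor by Jensen's inequality \emph{before} dominating an average by a maximal function. Because $s=1+q/p'\geq 1$, the map $t\mapsto t^{s}$ is convex, so
\[\left(\frac{1}{|Q|}\int_Q g\,dy\right)^{s}\leq \frac{1}{|Q|}\int_Q g^{s}\,dy=\frac{1}{|Q|}\int_Q f^{p}w^{-q}\,dy\leq M(f^{p}w^{-q})(x).\]
Raising to the $(1-\gamma/n)/s=1/q$ power yields
\[\left(\frac{1}{|Q|}\int_Q g\,dy\right)^{1-\gamma/n}\leq M(f^{p}w^{-q})(x)^{1/q}.\]

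Combining these two bounds and taking the supremum over all cubes $Q\ni x$ produces
\[M_\gamma(fw^{-1})(x)\leq M(f^{p}w^{-q})(x)^{1/q}\left(\int_{\R^{n}}f(y)^{p}\,dy\right)^{\gamma/n},\]
which is precisely \eqref{eq: punctual}. There is no real obstacle beyond bookkeeping: one only has to verify that the exponents balance correctly after the Jensen step, which is immediate from $(1-\gamma/n)/s=1/q$. In the borderline case $p=1$ we have $s=1$ and Jensen's inequality reduces to an equality, so the estimate is in fact already contained in Lemma~\ref{desigualdad_puntual_Mfrac_M} itself.
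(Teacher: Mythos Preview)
Your proof is correct and follows exactly the approach the paper indicates: you reproduce the H\"older step from Lemma~\ref{desigualdad_puntual_Mfrac_M} and then apply Jensen's inequality with exponent $s$ to the average $\frac{1}{|Q|}\int_Q g$, using $g^{s}=f^{p}w^{-q}$ and $(1-\gamma/n)/s=1/q$. Your additional remark that the case $p=1$ (so $s=1$) is already contained in the lemma is a nice observation.
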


Let us recall that a \emph{weight} $w$  is a locally integrable  function defined on
 $\R^n$, such that $0<w(x)<\infty$ a.e. $x\in \R^n$.
 For $1<p<\infty$ the \emph{Muckenhoupt $A_p$ class} is defined as the set of all weights $w$
 for which there exists a positive constant $C$ such that the inequality
\[\left(\frac{1}{|Q|}\int_Q w\right)\left(\frac{1}{|Q|}\int_Q w^{-\frac{1}{p-1}}\right)^{p-1}
\leq C\] holds for every cube $Q\subset \R^n$,  with sides parallel
to the coordinate axes.  For $p=1$, we say that $w\in A_1$ if there
exists a positive constant $C$ such that
 \begin{equation*}
  \frac{1}{|Q|}\int_Q w\le C\, \inf_Q w(x),
 \end{equation*}
 for every cube $Q\subset \R^n$. 
The smallest constant $C$ for which the Muckenhoupt condition holds
is called the $A_p$-constant of $w$, and denoted by $[w]_{A_p}$.
The $A_\infty$ class is defined by the collection of all the $A_p$ classes.
It is easy to see
that if $p<q$ then $A_p\subseteq A_q$. Given $1<p<\infty$, we use
$p'$ to denote the conjugate exponent $p/(p-1)$.
For $p=1$ we take $p'=\infty$. Some classical references for the basic theory of Muckenhoupt weights
 are for example~\cite{javi} and~\cite{garcia-rubio}.\\
 
For $1<p,q<\infty$ we say $w\in A_{p,q}$ if there exists a positive constant $C$ such that for every cube $Q\subset\R^n$
\[\left(\frac{1}{|Q|}\int_Qw^q\,dx\right)^{1/q}\left(\frac{1}{|Q|}\int_Qw^{-p'}\,dx\right)^{1/{p'}}\leq C.\]
When $p=1$ or $q=\infty$ the condition is written as follows
\[\left(\frac{1}{|Q|}\int_Qw^q\,dx\right)^{1/q}\|\mathcal{X}_Qw^{-1}\|_{\infty}\leq C,\]
\[\|\mathcal{X}_Qw\|_{\infty}\left(\frac{1}{|Q|}\int_Qw^{-p'}\,dx\right)^{1/{p'}}\leq C,\]
respectively. The smallest constant for which the conditions above hold is denoted by $[w]_{A_{p,q}}$.

These classes can be rewritten in terms of the $A_p$ classes. More precisely,
for $1<p,q<\infty$ we have that
\begin{enumerate}[(a)]
\item $w\in A_{p,q}$ if and only if $w^q\in A_{1+q/{p'}}$, where $p'=p/(p-1)$ denotes the conjugate exponent of $p$; (see \cite{Muckenhoup-Wheeden-Int-Fraccionaria})
\item $w\in A_{p,\infty}$ if and only if $w^{-p'}\in A_1$;
\item $w\in A_{1,q}$ if and only if $w^q\in A_1$.
\end{enumerate}

An important property of  Muckenhoupt weights is the \emph{reverse H\"{o}lder's condition}, which establishes that if
$w\in A_p$, for some $1\leq p\leq\infty$, then there exists a positive constant $C$
and $s>1$, such that for every cube $Q$
\begin{equation*}
\left(\frac{1}{|Q|}\int_Q w^s(x)\,dx\right)^{1/s}\leq
\frac{C}{|Q|}\int_Q w(x)\,dx.
\end{equation*}
We write $w\in
\textrm{RH}_s$ to point out that the inequality above holds,
and we denote by $[w]_{\textrm{RH}_s}$ the smallest constant $C$ for
which this condition holds and it only depends on $n$, $p$ and $[w]_{A_p}$.

A weight $w$ belongs to RH$_\infty$ if
there exists a positive constant $C$ such
that\begin{equation*}
\sup_Q w\leq\frac{C}{|Q|}\int_Q w,
\end{equation*}
for every $Q\subset \R^n$. Let us observe that
$\textrm{RH}_\infty\subseteq \textrm{RH}_s\subseteq \textrm{RH}_q$,
for every $1<q<s$. It can be also proved that $\textrm{RH}_s\subseteq A_\infty$.\\ 

The following lemmas will be very useful in the proof of our main results. The proof of the statements in the first one can be found in \cite{CU-N}.

\begin{lema}\label{lema_equivalencias}
The following statements hold.
\begin{enumerate}
\item $w\in A_{\infty}$ if and only if $w=w_1w_2$, with $w_1\in A_1$ and $w_2\in \textrm{RH}_{\infty}$.
\item If $w\in A_1$  then $w^{-1}\in\textrm{RH}_{\infty}$.
\item If $u,v\in\textrm{RH}_{\infty}$ then $uv\in\textrm{RH}_{\infty}$.
\end{enumerate}
\end{lema}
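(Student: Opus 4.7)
The plan is to address the three items in the order (2), (3), (1), since (2) feeds directly into both (3) and (1).

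For (2), I would argue straight from the definitions: writing $w_Q := |Q|^{-1}\int_Q w$, the $A_1$ condition reads $w_Q \leq [w]_{A_1}\inf_Q w$, so $\sup_Q w^{-1} = (\inf_Q w)^{-1} \leq [w]_{A_1}/w_Q$. Jensen's inequality applied to the convex function $t\mapsto 1/t$ yields $1/w_Q \leq |Q|^{-1}\int_Q w^{-1}$. Combining these two estimates produces precisely the $\textrm{RH}_\infty$ inequality for $w^{-1}$.

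For (3), let $u,v\in\textrm{RH}_\infty$ with constants $C_u,C_v$. The pointwise bounds $u\leq C_u u_Q$ and $v\leq C_v v_Q$ a.e.\ on $Q$ give $\sup_Q uv \leq C_u C_v\, u(Q)v(Q)/|Q|^2$. The main task is a matching lower bound for $|Q|^{-1}\int_Q uv$. I would introduce $E=\{x\in Q: u(x)\geq u_Q/2\}$; splitting $u_Q|Q|=\int_E u + \int_{Q\setminus E} u$ and using $u\leq C_u u_Q$ on $E$ forces $|E|\geq |Q|/(2C_u)$. Since $v\in\textrm{RH}_\infty\subset A_\infty$, the standard $A_\infty$ property supplies $\beta>0$ with $v(E)\geq \beta v(Q)$, and hence
\[
\frac{1}{|Q|}\int_Q uv \;\geq\; \frac{u_Q}{2|Q|}\,v(E) \;\geq\; \frac{\beta}{2}\cdot\frac{u(Q)v(Q)}{|Q|^2},
\]
which, combined with the upper bound, yields $uv\in\textrm{RH}_\infty$.

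For (1), I would prove the two implications separately. \emph{Forward:} $w\in A_\infty$ means $w\in A_p$ for some $p\geq 1$, and Jones' factorization theorem gives $w=w_1 w_3^{1-p}$ with $w_1,w_3\in A_1$. By (2), $w_3^{-1}\in\textrm{RH}_\infty$, and Jensen applied to $t\mapsto t^{p-1}$ shows $\textrm{RH}_\infty$ is stable under positive powers $\geq 1$, so $w_2:=w_3^{1-p}=(w_3^{-1})^{p-1}\in\textrm{RH}_\infty$. \emph{Converse:} given $w_1\in A_1$ and $w_2\in\textrm{RH}_\infty$, the key pointwise bounds are $w_1(x)\geq w_{1,Q}/[w_1]_{A_1}$ and $w_2(x)\leq C_2\, w_{2,Q}$ a.e.\ on $Q$. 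For $E\subset Q$ with $|E|/|Q|\geq \alpha$, the $A_\infty$ property of $w_2$ (via $\textrm{RH}_\infty\subset A_\infty$) gives $w_2(E)\geq \beta\, w_2(Q)$, so
\[
\frac{(w_1 w_2)(E)}{(w_1 w_2)(Q)} \;\geq\; \frac{(w_{1,Q}/[w_1]_{A_1})\,w_2(E)}{C_2\, w_{2,Q}\, w_1(Q)} \;\geq\; \frac{\beta}{C_2\,[w_1]_{A_1}},
\]
which is an $A_\infty$ characterization for $w_1 w_2$. This converse is the main obstacle, since $A_\infty$ is not closed under products in general; it is precisely the near-constancy of $\textrm{RH}_\infty$ weights on cubes that makes the argument work.
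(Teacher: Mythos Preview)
The paper does not give its own proof of this lemma; it simply records the statements and cites \cite{CU-N} for the proofs. So there is nothing to compare against, and your proposal is supplying an argument the authors deliberately omitted.

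Your arguments for (2), (3), and the converse direction of (1) are correct as written; the appeal to $\textrm{RH}_\infty\subset A_\infty$ in (3) and in (1) is legitimate since the paper records that inclusion as an independent known fact. There is one small slip in the forward direction of (1): you factor $w\in A_p$ for ``some $p\geq 1$'' and then invoke Jensen for $t\mapsto t^{p-1}$ to conclude that $\textrm{RH}_\infty$ is stable under powers $\geq 1$. But if $1<p<2$ the exponent $p-1$ lies in $(0,1)$, and for such exponents Jensen goes the wrong way. The fix is immediate: since the $A_p$ classes increase with $p$, simply choose $p\geq 2$ at the outset so that $p-1\geq 1$. (Alternatively one can check directly that $\textrm{RH}_\infty$ is stable under \emph{all} positive powers: for $0<\alpha<1$, the bound $w\leq C w_Q$ gives $w^{\alpha-1}\geq (Cw_Q)^{\alpha-1}$, whence $(w^\alpha)_Q\geq C^{\alpha-1}(w_Q)^\alpha$.) With that adjustment your proof is complete.
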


\begin{lema}\label{lema_producto_Ainf_RHinf}
Let $u$ and $v$ be two weights such that $u\in\textrm{RH}_{\infty}$ and $v\in A_\infty$. Then $uv\in A_\infty$.
\end{lema}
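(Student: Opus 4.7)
The plan is to decompose $v$ via the characterization of $A_\infty$ from part (1) of Lemma~\ref{lema_equivalencias}, move the $RH_\infty$ factor next to $u$, and then recognize the result as a product of the type characterized in Lemma~\ref{lema_equivalencias}(1).

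More concretely, since $v\in A_\infty$, part (1) of Lemma~\ref{lema_equivalencias} provides a factorization $v=v_1v_2$ with $v_1\in A_1$ and $v_2\in\mathrm{RH}_\infty$. Then
\[
uv = v_1\,(uv_2).
\]
Because $u\in\mathrm{RH}_\infty$ by hypothesis and $v_2\in\mathrm{RH}_\infty$ by construction, part (3) of Lemma~\ref{lema_equivalencias} yields $uv_2\in\mathrm{RH}_\infty$. Hence $uv$ is expressed as a product of a weight in $A_1$ (namely $v_1$) and a weight in $\mathrm{RH}_\infty$ (namely $uv_2$), and applying the ``if'' direction of part (1) of Lemma~\ref{lema_equivalencias} gives $uv\in A_\infty$.

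There is essentially no obstacle: the lemma is a direct assembly of the three items of Lemma~\ref{lema_equivalencias}. The only thing worth checking is that $u v_2$ really is a weight in the sense used throughout the paper (locally integrable, positive a.e.), which is immediate from the fact that both factors are weights and membership in $\mathrm{RH}_\infty$ is preserved under products.
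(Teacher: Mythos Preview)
Your proof is correct and follows essentially the same approach as the paper: both factorize $v=v_1v_2$ via Lemma~\ref{lema_equivalencias}(1) and then use Lemma~\ref{lema_equivalencias}(3) to conclude that $uv_2\in\mathrm{RH}_\infty$. The only difference is cosmetic: where you finish by invoking the ``if'' direction of Lemma~\ref{lema_equivalencias}(1) directly, the paper instead verifies by hand that $uv=(uv_2)v_1\in\mathrm{RH}_s$ for some $s>1$ (using $v_1\in A_1\subset\mathrm{RH}_s$ and the $\mathrm{RH}_\infty$ bound on $uv_2$), which amounts to reproving that implication in this special case; your version is slightly more economical.
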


\begin{proof}
From Lemma~\ref{lema_equivalencias}, $v\in A_{\infty}$ implies that there exist $v_0\in A_1$ and  $v_1\in \textrm{RH}_{\infty}$ such that $v=v_0v_1$.
It will be enough to prove that $uv\in \textrm{RH}_{s}$, for some $s>1$. Since  $v_0\in A_1$ there exists  $s>1$ such that $v_0\in \textrm{RH}_{s}$. On the other hand, from Lemma~\ref{lema_equivalencias} we also have that $uv_1\in \textrm{RH}_{\infty}$. Then, given a cube $Q$ of $\R^n$ we have
\begin{align*}
\left(\frac{1}{|Q|}\int_Q (uv)^s\,dx\right)^{1/s}&=\left(\frac{1}{|Q|}\int_Q (uv_1v_0)^s\,dx\right)^{1/s}\\
&\leq \sup_Q{uv_1}\left(\frac{1}{|Q|}\int_Qv_0^s\,dx\right)^{1/s}\\
&\leq [uv_1]_{\textrm{RH}_\infty}[v_0]_{\textrm{RH}_s}\frac{1}{|Q|}\int_Q uv_1\,dx\frac{1}{|Q|}\int_Q v_0\,dx\\
&\leq[uv_1]_{\textrm{RH}_\infty}[v_0]_{\textrm{RH}_s}[v_0]_{A_1}\frac{1}{|Q|}\int_Q uv_0v_1\,dx\\
&=\frac{C}{|Q|}\int_Quv\,dx,
\end{align*}
as desired.
\end{proof}

Given weights $u$ and $v$, by $v \in  A_p(u)$ we mean that $v$ satisfies the $A_p$ condition
with respect to the measure $\mu$ defined as $d\mu=u(x)\, dx$. More precisely, for $1<p< \infty$, we say that
$v\in A_p(u)$ if there exists a positive constant $C$ such that
\[\left(\frac{1}{u(Q)}\int_Q v(x)u(x)\,dx \right)\left(\frac{1}{u(Q)}\int_Q v(x)^{-\frac{1}{p-1}}u(x)\,dx\right)^{p-1}\leq C,\]
for every cube $Q\subset\R^n$. As usual, $u(Q)=\int_Q u(x)\,dx$. A weight $v$ belongs to $A_1(u)$ if
\[\frac{1}{u(Q)}\int_Q v(x)u(x)\,dx\le C\, \inf_Q v(x).\]
We denote the collection of all the $A_p(u)$ classes by $A_\infty(u)$.\\
%



The following technical lemma will be useful in the proof of mixed estimates for $M_\gamma$.

\begin{lema}\label{lema_Mgamma_Mgamma0}
Let  $0<\gamma<n$, $1<p<n/\gamma$ and  $\gamma_0=p\gamma$. If $v\in A_1$ then for every bounded function $f$ with compact support we have
\[\frac{M_{\gamma}(fv)(x)}{v(x)}\leq [v]_{A_1}^{1/{p'}}\left(\frac{M_{\gamma_0}(f^pv)(x)}{v(x)}\right)^{1/p}.\]
\end{lema}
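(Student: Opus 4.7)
The plan is to bound the fractional average $|Q|^{\gamma/n - 1}\int_Q fv$ by a quantity involving $M_{\gamma_0}(f^pv)(x)$ and $v(x)$ for an arbitrary cube $Q\ni x$, and then take the supremum over $Q$.

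The main step is a clean application of Hölder's inequality with exponents $p$ and $p'$, splitting the integrand as $fv = (f v^{1/p}) \cdot v^{1/p'}$. This yields
\[
\frac{1}{|Q|^{1-\gamma/n}}\int_Q fv \,dy \leq \frac{1}{|Q|^{1-\gamma/n}}\left(\int_Q f^p v\,dy\right)^{1/p}\left(\int_Q v\,dy\right)^{1/p'}.
\]
Now I would rewrite each factor as an average times the appropriate power of $|Q|$. For the first factor, pulling out $|Q|^{(1-\gamma_0/n)/p}$ introduces $M_{\gamma_0}(f^pv)(x)^{1/p}$ as an upper bound since $x\in Q$. For the second, pulling out $|Q|^{1/p'}$ gives the average $|Q|^{-1}\int_Q v$, which by the $A_1$ condition of $v$ is bounded by $[v]_{A_1}\,v(x)$ for a.e. $x \in Q$, hence $[v]_{A_1}^{1/p'} v(x)^{1/p'}$ after raising to the $1/p'$ power.

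The key arithmetic check is that the powers of $|Q|$ balance: we accumulate $-(1-\gamma/n)+(1/p - \gamma/n) + 1/p'$, and using $\gamma_0 = p\gamma$ this simplifies to $0$, so no extra factor of $|Q|$ survives. Combining these estimates,
\[
\frac{1}{|Q|^{1-\gamma/n}}\int_Q fv\,dy \leq [v]_{A_1}^{1/p'}\,v(x)^{1/p'}\,M_{\gamma_0}(f^p v)(x)^{1/p}.
\]
Taking supremum over cubes containing $x$ and dividing by $v(x)$ gives the claimed inequality after noting that $v(x)^{1/p'}/v(x) = v(x)^{-1/p}$, which pulls inside the $1/p$ power of $M_{\gamma_0}(f^pv)(x)/v(x)$.

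I do not anticipate any serious obstacle: the assumption that $f$ is bounded with compact support guarantees all integrals are finite, and the pointwise use of $A_1$ is standard (the bound $|Q|^{-1}\int_Q v \leq [v]_{A_1} v(x)$ holds a.e. on $Q$). The only point that requires mild care is the bookkeeping of the exponents, particularly verifying that the definition $\gamma_0 = p\gamma$ is exactly what makes the $|Q|$ powers cancel.
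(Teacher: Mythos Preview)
Your proof is correct and follows essentially the same approach as the paper: split $fv=(fv^{1/p})v^{1/p'}$, apply H\"older with exponents $p,p'$, distribute the powers of $|Q|$ so that the first factor becomes the $\gamma_0$-fractional average and the second becomes the plain average of $v$, and then invoke the $A_1$ condition. The only cosmetic difference is that the paper divides by $v(x)$ from the start rather than at the end.
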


\begin{proof}
Fix $x\in\R^n$ and a cube $Q$ containing $x$. Then
\begin{align*}
\frac{1}{v(x)}\frac{1}{|Q|^{1-\gamma/n}}\int_Q fv^{1/p}v^{1/{p'}}&\leq \frac{1}{v(x)}\frac{1}{|Q|^{1-\gamma/n}}\left(\int_Q f^pv\right)^{1/p}\left(\int_Q v\right)^{1/{p'}}\\
&=\frac{1}{v(x)}\left(\frac{1}{|Q|^{1-\gamma_0/n}}\int_Q f^pv\right)^{1/p}\left(\frac{1}{|Q|}\int_Q v\right)^{1/p'}\\
&\leq \frac{[v]_{A_1}^{1/{p'}}}{v(x)}\left(M_{\gamma_0}(f^pv)(x)\right)^{1/p}v^{1/{p'}}(x)\\
&=[v]_{A_1}^{1/{p'}}\left(\frac{M_{\gamma_0}(f^pv)(x)}{v(x)}\right)^{1/p}.
\end{align*}
\end{proof}

\medskip
\medskip

\section{Mixed inequalities for $M_\gamma$}\label{section:main-fractional maximal}

The following result, which has been proved in \cite{CU-M-P} will let us find mixed inequalities for the operator $M_\gamma$, where $0<\gamma<n$.

\begin{teo}\cite[Thm. 1.3]{CU-M-P}\label{mixed_para_M}
If $u,v$ are weights such that $u,v\in A_1$, or $u\in A_1$ and $v\in
A_{\infty}(u)$, then there exists a positive constant $C$ such that for every
$t>0$,
\[uv\left(\left\{x\in\R^n: \frac{M(fv)(x)}{v(x)}>t\right\}\right)\leq \frac{C}{t}\int_{\R^n}|f(x)|u(x)v(x)\,dx,\]
where $M$ is the Hardy-Littlewood  maximal operator.
\end{teo}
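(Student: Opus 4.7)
The plan is to prove the weak-$(1,1)$ mixed bound for $Sf:=M(fv)/v$ against the measure $uv\,dx$ by a Calderón--Zygmund--type stopping cube argument adapted to $v$. Assume $f\ge 0$ is bounded with compact support; by scaling it suffices to fix $t=1$ and estimate $uv(\Omega)$ with $\Omega:=\{M(fv)>v\}$. For each $x\in\Omega$ pick a cube $Q_x\ni x$ realizing $\frac{1}{|Q_x|}\int_{Q_x}fv>v(x)$, and apply a Vitali selection to obtain a disjoint family $\{Q_j\}$ with $\Omega\subset\bigcup_j 5Q_j$, each $Q_j$ inheriting a point $x_j\in Q_j$ with $\frac{1}{|Q_j|}\int_{Q_j}fv>v(x_j)$.

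In the first case ($v\in A_1$), the $A_1$ estimate $v(Q_j)/|Q_j|\le [v]_{A_1}v(x_j)$ upgrades the selection bound to $v(Q_j)\lesssim\int_{Q_j}fv$. Combined with $u\in A_1$, which gives $u(y)\gtrsim u(Q_j)/|Q_j|$ a.e.\ on $Q_j$, this yields
\[
\int_{Q_j}f\,uv\gtrsim\frac{u(Q_j)}{|Q_j|}\int_{Q_j}fv\gtrsim\frac{u(Q_j)v(Q_j)}{|Q_j|}.
\]
The remaining step is the comparison $uv(5Q_j)\lesssim u(Q_j)v(Q_j)/|Q_j|$. Both weights are doubling and satisfy reverse Hölder $u\in\mathrm{RH}_{1+\varepsilon_u}$, $v\in\mathrm{RH}_{1+\varepsilon_v}$; after a self-improving step to align the exponents, Hölder on $5Q_j$ gives $uv(5Q_j)\lesssim u(Q_j)v(Q_j)/|Q_j|$, the dilation factor being absorbed by the doubling of $u$ and $v$. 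Summing over the disjoint $\{Q_j\}$ produces the weak-$(1,1)$ bound.

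For the case $u\in A_1$, $v\in A_\infty(u)$, the weight $v$ need not be doubling, so the previous Hölder step fails. The idea is to reroute the entire argument through the measure $d\mu:=u\,dx$: the cube selection and Vitali extraction are performed in terms of $\mu$-averages, and the $A_1$ hypothesis on $u$ makes Lebesgue and $\mu$-averages pointwise comparable up to $[u]_{A_1}$, allowing one to convert back at the end. Lemma~\ref{lema_equivalencias}, applied inside the category $A_\infty(\mu)$, supplies a factorization $v=v_1v_2$ with $v_1\in A_1(\mu)$ and $v_2\in\mathrm{RH}_\infty(\mu)$; the $\mathrm{RH}_\infty$ factor is essentially constant on each cube (up to $[v_2]_{\mathrm{RH}_\infty(\mu)}$) and can be absorbed, while Lemma~\ref{lema_producto_Ainf_RHinf} guarantees that the combined product weight remains in $A_\infty(\mu)$. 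This reduces the second case to the same Calderón--Zygmund scheme as the first, but relative to $\mu$.

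The principal obstacle is the second case: the lack of doubling of $v$ obstructs both the dilation step $uv(5Q_j)\lesssim uv(Q_j)$ and the reverse Hölder comparison on $v$. The remedy is to carry out every average, selection, and covering relative to $u\,dx$, using $A_1$ of $u$ as the bridge back to Lebesgue integrals and the $\mathrm{RH}_\infty(\mu)$ component furnished by Lemma~\ref{lema_equivalencias} as the source of the constancy-on-cubes that compensates for the missing Lebesgue-doubling of $v$.
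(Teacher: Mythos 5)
Your argument has a genuine gap at its central step, and it occurs already in the first case $u,v\in A_1$. The comparison $uv(5Q_j)\lesssim u(Q_j)v(Q_j)/|Q_j|$ that you invoke is false for general $A_1$ weights: by H\"older it would require $u\in \textrm{RH}_r$ and $v\in \textrm{RH}_{r'}$ for a \emph{conjugate} pair of exponents, and the reverse H\"older self-improvement of an $A_1$ weight only reaches some exponent slightly larger than $1$, never an arbitrarily prescribed one. In fact no constant depending only on $[u]_{A_1}$ and $[v]_{A_1}$ can work: for $u=v=|x|^{-\alpha}$ on $\R$ and $Q$ centered at the origin one computes $|Q|\,uv(Q)/\bigl(u(Q)v(Q)\bigr)\approx (1-\alpha)^2/(1-2\alpha)$, which blows up as $\alpha\uparrow 1/2$ while the $A_1$ constants stay bounded, and at $\alpha=1/2$ the product $uv$ is not even locally integrable although $u(Q)v(Q)/|Q|$ is finite. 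If such a ``reverse H\"older inequality for the pair'' were available, Sawyer's theorem would indeed follow in a few lines; the difficulty of the result is concentrated exactly at this point, because $\{M(fv)>tv\}$ is not a level set of $M(fv)$ and the selected cubes carry no common scale. Note also that your Vitali extraction is not immediate: the cubes $Q_x$ satisfy $|Q_x|^{-1}\int_{Q_x}fv>v(x)$ with $v(x)$ possibly arbitrarily small, so there is no uniform bound on their side lengths, which the $5r$-covering lemma needs.

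The second case inherits the same gap, since you explicitly reduce it to ``the same Calder\'on--Zygmund scheme as the first''. Moreover, two auxiliary claims there are unjustified: membership in $\textrm{RH}_\infty$ with respect to $d\mu=u\,dx$ only gives $\sup_Q v_2\lesssim \mu(Q)^{-1}\int_Q v_2\,d\mu$, not that $v_2$ is essentially constant on $Q$ (for that you would also need $v_2\in A_1(\mu)$), so the proposed ``absorption'' of this factor does not go through as stated; and Lemma~\ref{lema_equivalencias} is formulated for Lebesgue measure, so its use inside $A_\infty(u)$ requires at least an argument that the factorization transfers to the doubling measure $u\,dx$. Finally, be aware that the paper does not prove this theorem at all: it is quoted from \cite{CU-M-P} (Theorem 1.3 there), whose proof --- an $n$-dimensional elaboration of Sawyer's original argument \cite{Sawyer} for the $u,v\in A_1$ case, together with a separate argument for $u\in A_1$, $v\in A_\infty(u)$ --- is substantially more delicate than the covering argument you propose.
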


\begin{proof}[Proof of Theorem~\ref{teo_main_Mgamma}]




Without loss of generality, since $M_\gamma{(fv)}=M_\gamma{(|f|v)}$ we can assume $f$ nonnegative. Let us consider first the case $p=1$. In this case, we have that $q=n/(n-\gamma)$ and $p'=\infty$, so that the hyphoteses on the weights are $u,v^q\in A_1$ or, $u\in A_1$ and $v^q\in A_\infty(u)$.
Set $w=u^{1/q}$ and $f_0=fwv$. We can also assume $f_0\in L^1$, since in other case the result trivially holds.
By applying Lemma~\ref{desigualdad_puntual_Mfrac_M} and Remark~\ref{rem puntual} to $f_0$ and $w$ we get
\[M_\gamma(fv)(x)=M_\gamma(f_0w^{-1})(x)\leq M(f_0w^{-q})^{1/q}(x)\left(\int_{\R^n}f_0(y)\,dy\right)^{\gamma/n}.\]
Then
\begin{align*}
uv^q\left(\left\{x\in \R^n: \frac{M_\gamma(fv)(x)}{v(x)}>t\right\}\right)^{1/q}&\leq uv^q\left(\left\{x\in \R^n: \frac{M(f_0w^{-q})(x)}{v^q(x)}>\left(\frac{t}{\left(\int_{\R^n}f_0\right)^{\gamma/n}}\right)^q\right\}\right)^{1/q}.
\end{align*}
From Theorem~\ref{mixed_para_M}, the last term can be bounded, for both conditions on the weights, by
\[C\,\frac{\left(\int_{\R^n}f_0\right)^{\gamma/n}}{t}\left(\int_{\R^n}f_0w^{-q}v^{-q}uv^q\,dy\right)^{1/q}=\frac{C}{t}\int_{\R^n}f(y)u^{1/q}(y)v(y)\,dy,\]
since $f_0w^{-q}u=f_0$ and $\gamma/n+1/q=1$.




Let us consider now the case $1<p<n/\gamma$. We shall begin with the assumption $u,v^{q/p}\in A_1$ and, as above,  we can assume that $\int_{\R^n}f^pu^{p/q}v\,dx$ is finite.
We have to prove that
\[uv^{q_0}\left(\left\{x\in \R^n: \frac{M_\gamma(fv)(x)}{v(x)}>t\right\}\right)^{1/{q_0}}\leq \frac{C}{t^p}\left(\int_{\R^n}|f(x)|^pu^{1/{q_0}}(x)v(x)\,dx\right),\]
where $q_0=q/p>1$. In order to see that inequality above holds, denote
$\gamma_0=p\gamma$. Then $1/{q_0}=p/q=1-p\gamma/n=1-\gamma_0/n$, and  $0<\gamma_0<n$ since $p<n/\gamma$. By applying Lemma~\ref{desigualdad_puntual_Mfrac_M} with $\gamma=\gamma_0$, $p=1$, $q=q_0$ and $s=1$ we obtain
\begin{equation}\label{eq1_teomfracc_pgeneral}
M_{\gamma_0}(f_0w^{-1})(x)\leq M(f_0w^{-q_0})^{1/{q_0}}(x)\left(\int_{\R^n}f_0(y)\,dy\right)^{\gamma_0/n},
\end{equation}
for any non-negative functions $w$ and $f_0\in L^1$. Let us take $w=u^{1/{q_0}}$ and $f_0=f^pu^{1/{q_0}}v$. Then $f_0\in L^1$ and from Lemma~\ref{lema_Mgamma_Mgamma0}  and \eqref{eq1_teomfracc_pgeneral} we have that
\begin{align*}
\frac{M_{\gamma}(fv)(x)}{v(x)}&\leq[v]_{A_1}^{1/{p'}}\left(\frac{M_{\gamma_0}(f^pv)(x)}{v(x)}\right)^{1/p}\\
&= [v]_{A_1}^{1/{p'}}\left(\frac{M_{\gamma_0}(f_0 w^{-1})(x)}{v(x)}\right)^{1/p}\\
&\leq[v]_{A_1}^{1/{p'}}\left[\left(\frac{M(f_0w^{-q_0})}{v^{q_0}}\right)^{1/{q_0}}\left(\int_{\R^n}f_0\right)^{\gamma_0/n}\right]^{1/p}.
\end{align*}
 Then
\begin{align*}
uv^{q_0}\left(\left\{x: \frac{M_\gamma(fv)(x)}{v(x)}>t\right\}\right)^{1/{q_0}}&\leq uv^{q_0}\left(\left\{x: \frac{M(f_0w^{-q_0}v^{-q_0}v^{q_0})(x)}{v^{q_0}(x)}>\left(\frac{[v]_{A_1}^{1-p}t^p}{\left(\int_{\R^n}f_0\right)^{\gamma_0/n}}\right)^{q_0}\right\}\right)^{1/{q_0}}\\
&\leq \frac{[v]_{A_1}^{p-1}}{t^p}\left(\int_{\R^n}f_0\right)^{\gamma_0/n}\left(\int_{\R^n}f_0w^{-q_0}v^{-q_0}uv^{q_0}\right)^{1/q_0},
\end{align*}
from Theorem~\ref{mixed_para_M}. Since $f_0w^{-q_0}v^{-q_0}uv^{q_0}=f_0$ and $\gamma_0/n+1/{q_0}=1$, the last term is equal to $Ct^{-p}\int_{\R^n}f^pu^{1/{q_0}}v\,dx$, as desired.



Finally, we will consider the case $uv^{-q/{p'}}\in A_1$ and $v^q\in A_\infty(uv^{-q/{p'}})$.

Define
 $f_0=fvw$, where $w=u^{1/q}v^{-1/{p'}}$. So that $f_0w^{-1}=fv$ and  $f_0^p=f^pu^{p/q}v$. Then, by applying \eqref{eq: punctual} we get
\begin{align*}
uv^{q/p}\left(\left\{x: \frac{M_\gamma(fv)(x)}{v(x)}>t\right\}\right)^{p/q}&\leq uv^{q/p}\left(\left\{x : \frac{M(f_0^pw^{-q})(x)}{v^q(x)}>\left(\frac{t}{\left(\int_{\R^n}f_0^p\,dy\right)^{\gamma/n}}\right)^q\right\}\right)^{p/q}\\
& \leq uv^{-q/{p'}}v^q\left(\left\{x : \frac{M(f_0^pw^{-q}v^{-q}v^q)(x)}{v^q(x)}>\frac{t^q}{\left(\int_{\R^n}f_0^p\,dy\right)^{q\gamma/n}}\right\}\right)^{p/q}\\
& \leq \frac{C}{t^p}\left(\int_{\R^n}f_0^p\,dy\right)^{p\gamma/n}\left(\int_{\R^n}f_0^pw^{-q}v^{-q}uv^{q/p}\,dy\right)^{p/q},
\end{align*}
from  Theorem~\ref{mixed_para_M}. Notice now that $w^{-q}v^{-q}uv^{q/p}=u^{-1}v^{q/{p'}}v^{-q/{p'}}u=1$, so that the last term is equal to
\[\frac{C}{t^p}\left(\int_{\R^n}f_0^p(y)\,dy\right)^{p(1/q+\gamma/n)}=\frac{C}{t^p}\int_{\R^n}f_0^p(y)\,dy,\]
and the result is proved.
\end{proof}

\section{Mixed inequalities for $I_\gamma$}\label{section:main-fractional integral}

We devote this section to prove Theorem~\ref{teo_main_Tgamma}. In order to do so, we shall use the following result.

\begin{teo}[\cite{CU-M-P}, Thm. 1.7]\label{teo_extrapolacion_carlos}
Given a family $\mathcal{F}$ of pairs of functions that satisfies that there exists a number $p_0$, $0<p_0<\infty$ such that for all $w\in A_\infty$
\[\int_{\R^n}f(x)^{p_0}w(x)\,dx\leq C\int_{\R^n}g(x)^{p_0}w(x)\,dx, \quad\quad \]
for all $(f,g)\in\mathcal{F}$ such the left hand side is finite, and with $C$ depending only on $[w]_{A_\infty}$. Then, for all weights $u,v$ such that $u\in A_1$ and $v\in A_\infty$ we have that
\[\norm{fv^{-1}}_{L^{1,\infty}(uv)}\leq C\norm{gv^{-1}}_{L^{1,\infty}(uv)} \quad (f,g)\in\mathcal{F}.\]
\end{teo}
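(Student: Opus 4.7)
The plan follows the Rubio de Francia extrapolation paradigm, adapted to mixed weak-type norms. The overall strategy is to transfer the hypothesized $A_\infty$ strong-type inequality into a weak-type $L^{1,\infty}(uv)$ estimate by feeding a carefully constructed $A_\infty$ weight into the hypothesis and then combining Chebyshev on the left with a layer-cake decomposition on the right.

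By homogeneity in the pair $(f,g)$, I would normalize $\|gv^{-1}\|_{L^{1,\infty}(uv)}=1$ and reduce to showing $t\cdot uv(E_t)\le C$ uniformly in $t>0$, where $E_t=\{x:f(x)>tv(x)\}$. A standard truncation (replacing $f$ by $\min(f,N)\chi_{B(0,N)}$) reduces to the case $uv(E_t)<\infty$ and ensures that the finiteness condition in the hypothesis is satisfied. The central technical step is to construct an auxiliary weight $W\in A_\infty$ so that Chebyshev with exponent $p_0$ on the set $E_t$ (where $f>tv$) yields
\[t^{p_0}\,uv(E_t)\le \int_{E_t}f^{p_0}\,uv^{1-p_0}\,dx\le \int_{\R^n}f^{p_0}\,W\,dx,\]
after which the hypothesis provides $\int f^{p_0}W\,dx\le C\int g^{p_0}W\,dx$. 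The naive choice $W=uv^{1-p_0}$ is unavailable because $v^{1-p_0}$ is not generally in $A_\infty$ when $p_0>1$. The remedy is to multiply by a Rubio de Francia iterate $\mathcal{R}h$ of a dual test function $h\ge\chi_{E_t}$:
\[\mathcal{R}h(x)=\sum_{k\ge 0}\frac{S^k h(x)}{2^k\|S\|^k},\]
where $S$ is a maximal-function-like operator calibrated to the measure $uv$. Then $W=\mathcal{R}h\cdot uv^{1-p_0}\in A_\infty$ follows from the Coifman-Rochberg characterization of $A_1$ together with Lemmas~\ref{lema_equivalencias} and~\ref{lema_producto_Ainf_RHinf}, with constants controlled by $[u]_{A_1}$ and $[v]_{A_\infty}$.

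The final step, after applying the hypothesis, is to bound $\int g^{p_0}W\,dx=\int (g/v)^{p_0}\,\mathcal{R}h\cdot uv\,dx$ by the normalized weak-type norm. A layer-cake expansion of $(g/v)^{p_0}$ together with the pointwise estimate $uv\{g/v>s\}\le 1/s$ and a dyadic decomposition of the range of $\mathcal{R}h$ should yield a convergent integral, provided $S$ is chosen so that $\mathcal{R}h$ has the correct integrability against $uv$. The main obstacle is precisely the calibration of $S$ and $h$: one needs simultaneously that $\mathcal{R}h\cdot uv^{1-p_0}\in A_\infty$ with quantitative constants, that $\mathcal{R}h\ge\chi_{E_t}$ so Chebyshev recovers $t^{p_0}uv(E_t)$, and that the right-hand side remains summable against the weak-type information on $g/v$. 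Balancing these three constraints is exactly where the hypotheses $u\in A_1$ and $v\in A_\infty$ enter essentially, via the reverse-Hölder inequality on $u$ and the openness/self-improvement of $A_\infty$. The cases $0<p_0<1$ and $p_0\ge 1$ may require slightly different formulations of the iteration: Kolmogorov-type inequalities replace Hölder duality in the subunit range.
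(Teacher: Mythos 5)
There is nothing internal to compare against here: the paper does not prove this statement, it imports it verbatim as Theorem~1.7 of \cite{CU-M-P}. Judged on its own, your proposal follows a reasonable extrapolation philosophy (feed a constructed $A_\infty$ weight into the hypothesis, then use Chebyshev on the left), but both of its central steps are left open and, as formulated, face real obstructions. First, the existence of the weight $W=\mathcal{R}h\cdot uv^{1-p_0}\in A_\infty$ is asserted, not proved, and the proposed mechanism does not deliver it. Coifman--Rochberg gives $(Mh)^{\delta}\in A_1$ for the maximal operator with respect to Lebesgue measure; your operator $S$ is ``calibrated to the measure $uv$'', and $uv$ need not be doubling --- it need not even be locally integrable (take $u=|x|^{-a}\in A_1$, $v=|x|^{-b}\in A_\infty$ with $a+b\ge n$) --- so no Coifman--Rochberg-type statement is available for the iterates of $S$. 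Moreover, for $p_0>1$ the factor $v^{1-p_0}=v^{-(p_0-1)}$ is a negative power of a mere $A_\infty$ weight and can fail to be in $A_\infty$ or even to be a weight; multiplying it by an $A_1$-like factor does not in general repair this, and the paper's Lemmas~\ref{lema_equivalencias} and \ref{lema_producto_Ainf_RHinf} run in the opposite direction: they would require $v^{-(p_0-1)}\in \textrm{RH}_\infty$, i.e.\ essentially $v^{p_0-1}\in A_1$, which is not a hypothesis. So the key object of your argument is not shown to exist.

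Second, the closing estimate on the $g$-side cannot work as described. From $\norm{gv^{-1}}_{L^{1,\infty}(uv)}=1$ you only know $uv(\{g/v>s\})\le 1/s$, and a layer-cake bound of $\int_{\R^n}(g/v)^{p_0}\,\mathcal{R}h\,uv\,dx$ then leads to $\int_0^\infty s^{p_0-1}\min(uv(\R^n),s^{-1})\,ds$, which diverges at infinity whenever $p_0\ge 1$ (and at the origin when the total mass is infinite): weak $L^1$ information never controls $L^{p_0}$ integrals. To make a Chebyshev/Kolmogorov scheme viable one must first be free to lower the exponent to some $p_0<1$ --- but the hypothesis is assumed for a single $p_0$, so this step already requires the nontrivial strong-type $A_\infty$ extrapolation theorem, which you never invoke --- and one must additionally control the total mass of the measure $\mathcal{R}h\,uv\,dx$ and have weak-type information with respect to that measure rather than $uv$. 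You yourself identify the simultaneous ``calibration'' of $S$ and $h$ as the main obstacle; that calibration is precisely the content of the theorem, so what you have is a plan that circles the difficulty rather than a proof. If you want to pursue this result, work through the argument of \cite{CU-M-P} directly rather than reconstructing it from the Rubio de Francia template.
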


With this result, we can prove another estimate that will be useful in the proof of Theorem~\ref{teo_main_Tgamma}.

\begin{teo}\label{teo_norma_lqinfty_para_Tgamma}
Let $0<\gamma<n$, $1\leq p<n/\gamma$ and $1/q=1/p-\gamma/n$. If $u,v^{q/p}\in A_1$, then there exists a positive constant $C$  such that
\[\norm{I_\gamma(fv)v^{-1}}_{L^{q,\infty}(uv^{q/p})}\leq C\norm{M_\gamma(fv)v^{-1}}_{L^{q,\infty}(uv^{q/p})},\]
for  every  bounded function $f$ with compact support.
\end{teo}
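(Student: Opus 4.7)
The natural approach is to combine the Muckenhoupt--Wheeden Coifman-type inequality (Theorem~\ref{teo_muckenhoupt_Tgamma_Mgamma}) with the extrapolation Theorem~\ref{teo_extrapolacion_carlos}. First, by Theorem~\ref{teo_muckenhoupt_Tgamma_Mgamma}, for every $w\in A_\infty$ and every bounded, compactly supported $h$,
\[\int_{\R^n}|I_\gamma h|^q w\,dx\leq C\int_{\R^n}(M_\gamma h)^q w\,dx,\]
so the family $\mathcal{F}=\{(|I_\gamma h|^q,(M_\gamma h)^q):h\text{ bounded with compact support}\}$ satisfies the strong-type hypothesis of Theorem~\ref{teo_extrapolacion_carlos} with $p_0=1$. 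Invoking the extrapolation delivers, for any weights $\tilde u\in A_1$ and $\tilde v\in A_\infty$,
\[\||I_\gamma h|^q\tilde v^{-1}\|_{L^{1,\infty}(\tilde u\tilde v)}\leq C\|(M_\gamma h)^q\tilde v^{-1}\|_{L^{1,\infty}(\tilde u\tilde v)},\]
which, using the standard identity $\|F\|_{L^{q,\infty}(\mu)}^q=\|F^q\|_{L^{1,\infty}(\mu)}$, rewrites as
\[\|I_\gamma h\cdot\tilde v^{-1/q}\|_{L^{q,\infty}(\tilde u\tilde v)}\leq C\|M_\gamma h\cdot\tilde v^{-1/q}\|_{L^{q,\infty}(\tilde u\tilde v)}.\]

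To match the desired inequality, I specialize to $\tilde v=v^q$ and $\tilde u=uv^{-q/p'}$. With this choice $\tilde v^{-1/q}=v^{-1}$, and since $q-q/p'=q/p$ one gets $\tilde u\tilde v=uv^{q/p}$; substituting $h=fv$ then reproduces exactly the claimed estimate. The proof thus reduces to checking that the chosen pair $(\tilde u,\tilde v)=(uv^{-q/p'},v^q)$ satisfies $\tilde u\in A_1$ and $\tilde v\in A_\infty$ whenever $u,v^{q/p}\in A_1$. This is precisely the role of the auxiliary lemmas of Section~\ref{seccion_preliminares_y_def}: by Lemma~\ref{lema_equivalencias}(2), $v^{q/p}\in A_1$ forces $v^{-q/p}\in\textrm{RH}_\infty$; since $v^{-q/p'}=(v^{-q/p})^{p-1}$ and positive powers preserve $\textrm{RH}_\infty$, we obtain $v^{-q/p'}\in\textrm{RH}_\infty$. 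Combined with $u\in A_1\subset A_\infty$, Lemma~\ref{lema_producto_Ainf_RHinf} at once gives $\tilde u=uv^{-q/p'}\in A_\infty$, and a dual argument handles $\tilde v=v^q$.

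The main obstacle is the sharp refinement from $A_\infty$ to the precise $A_1$ membership of $\tilde u=uv^{-q/p'}$ required by Theorem~\ref{teo_extrapolacion_carlos}: in general the product of an $A_1$ weight with an $\textrm{RH}_\infty$ weight need not lie in $A_1$, so the lemmas recalled above are not by themselves sufficient. Closing this gap requires a direct computation which exploits both hypotheses $u\in A_1$ and $v^{q/p}\in A_1$ simultaneously---for instance through the Jones factorization $uv^{-q/p'}=u\cdot(v^{q/p})^{1-p}$, which at least places $\tilde u$ in $A_p$ and suggests appealing to a mild strengthening of Theorem~\ref{teo_extrapolacion_carlos} that tolerates $\tilde u\in A_p$ instead of $A_1$. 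Once the weight conditions are established, the desired weak-type comparison follows at once from the extrapolation step.
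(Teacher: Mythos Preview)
Your overall plan---Coifman inequality plus the extrapolation Theorem~\ref{teo_extrapolacion_carlos}---is exactly right, but the gap you flag is real and cannot be closed along the route you suggest. With $u\equiv 1$ and $v^{q/p}=|x|^{-\alpha}$ for $0<\alpha<n$ one has $u,v^{q/p}\in A_1$, yet $\tilde u=uv^{-q/p'}=|x|^{\alpha(p-1)}$ is a positive power of $|x|$ and hence fails $A_1$ whenever $p>1$. So in general $\tilde u\notin A_1$, and the ``mild strengthening'' you propose (allowing $\tilde u\in A_p$) is not part of Theorem~\ref{teo_extrapolacion_carlos} as stated.

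The paper avoids this obstruction by a simple regrouping: instead of putting $v^{-q/p'}$ into the weight, it is absorbed into the \emph{function pair}. Define
\[
F=\bigl(I_\gamma(fv)\bigr)^{q}\,v^{-q/p'},\qquad G=\bigl(M_\gamma(fv)\bigr)^{q}\,v^{-q/p'},
\]
and apply Theorem~\ref{teo_extrapolacion_carlos} to $(F,G)$ with the choices $\tilde u=u\in A_1$ and $\tilde v=v^{q/p}\in A_1\subset A_\infty$, both of which are immediate from the hypotheses. Since $Fv^{-q/p}=\bigl(I_\gamma(fv)\bigr)^{q}v^{-q}$ and $\tilde u\tilde v=uv^{q/p}$, the conclusion of the extrapolation is precisely the desired weak-type comparison after the identity $\|h\|_{L^{q,\infty}}^{q}=\|h^{q}\|_{L^{1,\infty}}$.

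The price for this regrouping is that the strong-type hypothesis is no longer the bare Coifman inequality: one must check that
\[
\int_{\R^n} F^{p_0}w\,dx\leq C\int_{\R^n} G^{p_0}w\,dx\qquad\text{for all }w\in A_\infty,
\]
i.e.\ that $\int (I_\gamma(fv))^{qp_0}w_0\leq C\int (M_\gamma(fv))^{qp_0}w_0$ with $w_0=v^{-qp_0/p'}w$. This follows from Theorem~\ref{teo_muckenhoupt_Tgamma_Mgamma} once $w_0\in A_\infty$, and here the lemmas you already invoked suffice: choose $p_0>p'/p$ so that $v^{-qp_0/p'}=(v^{-q/p})^{p_0 p/p'}$ is a power $>1$ of the $\mathrm{RH}_\infty$ weight $v^{-q/p}$, hence itself in $\mathrm{RH}_\infty$; then Lemma~\ref{lema_producto_Ainf_RHinf} gives $w_0\in A_\infty$. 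Thus the only missing idea in your argument is to shift $v^{-q/p'}$ from the weight side to the function side, which downgrades the required condition from $A_1$ to $A_\infty$ and makes the lemmas of Section~\ref{seccion_preliminares_y_def} sufficient.
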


\begin{proof}
Given a function  $f$, we can assume without loss of generality that $f$ is nonnegative. Denote $f_0=I_\gamma(fv)$ and  $g_0=M_\gamma(fv)$. Define $F=f_0^qv^{-q/{p'}}$ and  $G=g_0^qv^{-q/{p'}}$.
 We shall prove that there exists $p_0\in (0,\infty)$, such that for every $w\in A_\infty$ the inequality
\begin{equation}\label{eq1_teo_norma_lqinfty_para_Tgamma}
\int_{\R^n}F^{p_0}(x)w(x)\,dx\leq C\int_{\R^n}G^{p_0}(x)w(x)\,dx.
\end{equation}
holds.
Indeed, let $p_0$ to be chosen later,  and let  $w\in A_{\infty}$. Since $v^{q/p}\in A_1$, then $v^{-q/p}\in \textrm{RH}_{\infty}$. Also, $v^{-q\alpha/p}\in \textrm{RH}_{\infty}$ for every $\alpha>1$ (in fact, it follows for every positive $\alpha$). Then, taking  $p_0>1/(p-1)$ we have that  $p'/p<p_0$ and then $\alpha=p/{p'}p_0$ is greater than 1, so that $v^{-q/{p'}p_0}=(v^{-q/p})^{\alpha}$ belongs to $\textrm{RH}_{\infty}$. Since $w\in A_\infty$, from Lemma~\ref{lema_producto_Ainf_RHinf} we have that $w_0=v^{-q/{p'}p_0}w$ belongs to $A_\infty$. Hence, from Theorem~\ref{teo_muckenhoupt_Tgamma_Mgamma} we have
\begin{align*}
\int_{\R^n}F^{p_0}(x)w(x)\,dx&=\int_{\R^n}f_0^{qp_0}(x)w_0(x)\,dx\\
&=\int_{\R^n}\left(I_\gamma(fv)(x)\right)^{qp_0}w_0(x)\,dx\\
&\leq C\int_{\R^n}\left(M_\gamma(fv)(x)\right)^{qp_0}w_0(x)\,dx\\
&\leq C\int_{\R^n}g_0^{qp_0}(x)v^{-q/{p'}p_0}(x)w(x)\,dx\\
&=C\int_{\R^n}G^{p_0}(x)w(x)\,dx.
\end{align*}
From Theorem~\ref{teo_extrapolacion_carlos} we have that there exists $C>0$ such that
\[\norm{Fv^{-q/p}}_{L^{1,\infty}(uv^{q/p})}\leq C\norm{Gv^{-q/p}}_{L^{1,\infty}(uv^{q/p})}.\]
Hence
\begin{align*}
\norm{I_\gamma(fv)v^{-1}}_{L^{q,\infty}(uv^{q/p})}&=\norm{(I_\gamma(fv)^qv^{-q})^{1/q}}_{L^{q,\infty}(uv^{q/p})}\\
&=\norm{I_\gamma(fv)^qv^{-q}}_{L^{1,\infty}(uv^{q/p})}^{1/q}\\
&=\norm{Fv^{-q/p}}_{L^{1,\infty}(uv^{q/p})}^{1/q}\\
&\leq C\norm{Gv^{-q/p}}_{L^{1,\infty}(uv^{q/p})}^{1/q}\\
&=C\norm{M_\gamma(fv)^qv^{-q}}_{L^{1,\infty}(uv^{q/p})}^{1/q}\\
&=C\norm{M_\gamma(fv)v^{-1}}_{L^{q,\infty}(uv^{q/p})}.
\end{align*}
\end{proof}



\begin{proof}[Proof of Theorem~\ref{teo_main_Tgamma}]
Let us consider first the case $u,v^{q/p}\in A_1$. If $p=1$, then $q=n/(n-\gamma)$. From Theorem~\ref{teo_muckenhoupt_Tgamma_Mgamma} we have that
\[\int_{\R^n} I_\gamma(fv)^{p_0}(x)w(x)\,dx\leq C\int_{\R^n}M_\gamma(fv)^{p_0}(x)w(x)\,dx,\]
for every $0<p_0<\infty$ and every  $w\in A_\infty$. Then, from Theorem~\ref{teo_extrapolacion_carlos} we get
\[\norm{I_\gamma(fv)^qv^{-q}}_{L^{1,\infty}(uv^q)}\leq C\norm{M_\gamma(fv)^qv^{-q}}_{L^{1,\infty}(uv^q)}.\]
Hence, given $t>0$ we have
\begin{align*}
tuv^q\left(\left\{x\in \R^n: \frac{|I_\gamma(fv)(x)|}{v(x)}>t\right\}\right)^{1/q}&\leq \sup_{t>0}tuv^q\left(\left\{x\in \R^n: \frac{|I_\gamma(fv)(x)|}{v(x)}>t\right\}\right)^{1/q}\\
&=\norm{I_\gamma(fv)v^{-1}}_{L^{q,\infty}(uv^q)}\\
&=\norm{I_\gamma(fv)^qv^{-q}}_{L^{1,\infty}(uv^q)}^{1/q}\\
&\leq C\norm{M_\gamma(fv)^qv^{-q}}_{L^{1,\infty}(uv^q)}^{1/q}\\
&=C\sup_{t>0} tuv^q\left(\left\{x\in \R^n: \frac{M_\gamma(fv)(x)}{v(x)}>t\right\}\right)^{1/q}\\
&\leq C \sup_{t>0} t\frac{C}{t}\int_{\R^n}|f(x)|u^{1/q}(x)v(x)\,dx\\
&=C\int_{\R^n}|f(x)|u^{1/q}(x)v(x)\,dx,
\end{align*}
from Theorem~\ref{teo_main_Mgamma}.

We shall now consider the case $1<p<n/\gamma$. Let $t>0$ be fixed. From Theorems~\ref{teo_main_Mgamma} and ~\ref{teo_norma_lqinfty_para_Tgamma} we get



\begin{align*}
t^puv^{q/p}\left(\left\{x\in \R^n: \frac{|I_\gamma(fv)(x)|}{v(x)}>t\right\}\right)^{p/q}&\leq\left[\sup_{t>0}tuv^{q/p}\left(\left\{x\in \R^n: \frac{|I_\gamma(fv)(x)|}{v(x)}>t\right\}\right)^{1/q}\right]^p\\
&=\norm{I_\gamma(fv)v^{-1}}_{L^{q,\infty}(uv^{q/p})}^p\\
&\leq C\norm{M_\gamma(fv)v^{-1}}_{L^{q,\infty}(uv^{q/p})}^p\\
&=C\left[\sup_{t>0}tuv^{q/p}\left(\left\{x\in \R^n: \frac{M_\gamma(fv)(x)}{v(x)}>t\right\}\right)^{1/q}\right]^p\\
&\leq C\left[\sup_{t>0} t\frac{C}{t}\left(\int_{\R^n}|f(x)|^pu(x)^{p/q}v(x)\,dx\right)^{1/p}\right]^p\\
&= C\int_{\R^n}|f(x)|^pu(x)^{p/q}v(x)\,dx,
\end{align*}
and the desired inequality follows.

Finally, we shall prove the result for the case $uv^{-q/{p'}}\in A_1$ and $v^q\in A_\infty(uv^{-q/{p'}})$. Let us observe first that from the hypotheses we get




$uv^{q/p}=uv^{-q/{p'}}v^q\in A_\infty$, so that  $v^q=uv^{q/p}(uv^{-q/{p'}})^{-1}\in A_\infty$ from Lemma~\ref{lema_equivalencias} and \ref{lema_producto_Ainf_RHinf}. Hence
\begin{align*}
\norm{I_\gamma(fv)v^{-1}}_{L^{q,\infty}(uv^{q/p})}&=\norm{(I_\gamma(fv))^qv^{-q}}_{L^{1,\infty}(uv^{q/p})}^{1/q}\\
&=\norm{(I_\gamma(fv))^qv^{-q}}_{L^{1,\infty}(uv^{-q/{p'}}v^q)}^{1/q}\\
&\leq C \norm{(M_\gamma(fv))^qv^{-q}}_{L^{1,\infty}(uv^{-q/{p'}}v^q)}^{1/q}\\
&= C\norm{M_\gamma(fv)v^{-1}}_{L^{q,\infty}(uv^{-q/{p'}}v^q)}\\
&= C\norm{M_\gamma(fv)v^{-1}}_{L^{q,\infty}(uv^{q/p})},
\end{align*}
from Theorem~\ref{teo_extrapolacion_carlos}. Then,
\begin{align*}
t^puv^{q/p}\left(\left\{x\in \R^n: \frac{|I_\gamma(fv)(x)|}{v(x)}>t\right\}\right)^{p/q}&\leq\left[\sup_{t>0}tuv^{q/p}\left(\left\{x\in \R^n: \frac{|I_\gamma(fv)(x)|}{v(x)}>t\right\}\right)^{1/q}\right]^p\\
&=\norm{I_\gamma(fv)v^{-1}}_{L^{q,\infty}(uv^{q/p})}^p\\
&\leq C\norm{M_\gamma(fv)v^{-1}}_{L^{q,\infty}(uv^{q/p})}^p\\
&=C\left[\sup_{t>0}tuv^{q/p}\left(\left\{x\in \R^n: \frac{M_\gamma(fv)(x)}{v(x)}>t\right\}\right)^{1/q}\right]^p\\
&\leq C\left[\sup_{t>0} t\frac{C}{t}\left(\int_{\R^n}|f(x)|^pu(x)^{p/q}v(x)\,dx\right)^{1/p}\right]^p\\
&= C\int_{\R^n}|f(x)|^pu(x)^{p/q}v(x)\,dx,
\end{align*}
from which the result follows.
\end{proof}

\section{Applications}\label{section: applications}

In this section we will give some applications that can be obtained from the inequalities we have proved so far. More precisely, we will consider mixed weak inequalities of Sawyer type for commutators of Calder\'on-Zygmund operators and fractional integral operators, both with Lipschitz symbol.

Let us recall that given $0< \delta\leq1$, a function $b\in L^1_\textit{loc}$ belongs to the class $\Lambda(\delta)$ if there exists a positive constant $C$ such that
\[|b(x)-b(y)|\leq C|x-y|^{\delta},\]
for every $x,y\in\R^n$. The smallest constant for which the condition above holds is denoted by $\norm{b}_{\Lambda(\delta)}$.

 Let $T$ be an operator of convolution type, that is, $T$ is bounded on $L^2(\R^n)$ and such that for $f\in L^2$ with compact support,
 \begin{equation}\label{rep_T_fuera_del_soporte}
Tf(x)=\int_{\R^n}K(x-y)f(y)\,dy ,\quad\quad x\notin \supp f,
\end{equation}
with $K$ a measurable function defined away from the origin.
We say that $T$ is a \emph{Calder\'on-Zygmund operator}   if  $K$ is a \emph{standard kernel}, which means that
 $K:\mathbb R^n\times \mathbb R^n\backslash \Delta\to\mathbb C$  satisfies a size condition given by 
\[|K(x,y)|\lesssim \frac{1}{|x-y|^n},\]
and the smoothness conditions, usually called Lipschitz conditions,
\begin{equation}\label{eq:prop del nucleo}
|K(x,y)-K(x,z)|\lesssim \frac{|x-z|}{|x-y|^{n+1}},\quad \textrm{ if } |x-y|>2|y-z|,
\end{equation}
\[|K(x,y)-K(w,z)|\lesssim\frac{|x-w|}{|x-y|^{n+1}},\quad \textrm{ if } |x-y|>2|x-w|.\]
As usual, the notation $f(t)\lesssim g(t)$ means that there exists a constant $C$ such that $f(t)\leq Cg(t)$ for every $t$ in the domain. The constant $C$ may change on each occurrence.\\

We say that a Calder\'on-Zygmund operator $T$ is a \emph{Calder\'on-Zygmund singular integral} operator if
\[Tf(x)=\lim_{\varepsilon\to 0}\int_{|y-x|>\varepsilon}f(y)K(x-y)\,dy=\lim_{\varepsilon\to 0}T_\varepsilon f(x)\]
for almost every $x$.

Given $b\in L^1_\textit{loc}$ and a Calder\'on-Zygmund operator $T$, the commutator operator is defined by
\[[b,T]f(x)=T_bf(x)=b(x)Tf(x)-T(bf)(x),\]
for adequate functions $f$.

One can also consider higher order commutators, which are defined by induction as follows: $T_b^0=T$, and $T_b^m=[b, T_b^{m-1}]$, for $m\geq 1$.

It is not difficult to see, by using \eqref{rep_T_fuera_del_soporte}, that
\[T_b^mf(x)=\int_{\R^n}(b(x)-b(y))^mK(x-y)f(y)\,dy ,\quad\quad x\notin \supp f.\]
However, if $T$ is a Calder\'on-Zygmund operator that satisfies the stronger condition
\begin{equation}\label{rep_T_en_todos_lados}
Tf(x)=\int_{\R^n}K(x-y)f(y)\,dy, \quad\quad \textrm{a.e}\hspace{0.1truecm} x,
\end{equation}
then for every natural number $m$ we have a similar representation for its commutator of order $m$. That is,

\begin{equation}\label{rep_Tbm_en_todos_lados}
T_b^m f(x)=\int_{\R^n}(b(x)-b(y))^mK(x-y)f(y)\,dy, \quad\quad \textrm{a.e}\hspace{0.1truecm} x.
\end{equation}

We shall require condition \eqref{rep_T_en_todos_lados} in order to prove Theorem~\ref{teo_mixed_commutatorCZ_lip_ordenm}.

In \cite{CU-M-P} the authors proved mixed weak estimates for Calder\'on-Zygmund singular integral operators. The precise statement is contained in the following theorem.

\begin{teo}\cite[Thm. 1.3]{CU-M-P}\label{teo1.7}
If $u,v$ are weights such that $u,v\in A_1$, or $u\in A_1$ and $v\in
A_{\infty}(u)$, then there exists a positive constant $C$ such that for every
$t>0$,
\[uv\left(\left\{x\in\R^n: \frac{|T(fv)(x)|}{v(x)}>t\right\}\right)\leq \frac{C}{t}\int_{\R^n}|f(x)|u(x)v(x)\,dx,\]
where
 $T$ is any Calder\'on-Zygmund operator.
\end{teo}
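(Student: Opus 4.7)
The plan is to mirror the strategy used for the fractional integral in Theorem~\ref{teo_main_Tgamma}, specialised to the case $\gamma=0$, $p=q=1$: the mixed weak inequality for $T$ will be deduced from the analogous inequality for $M$ (Theorem~\ref{mixed_para_M}) by transferring control through the extrapolation result Theorem~\ref{teo_extrapolacion_carlos}. Without loss of generality, one may assume $f$ is nonnegative, bounded, and compactly supported, so that all quantities in sight are a priori finite (a truncation of $T$ by $T_\varepsilon$ allows one to return to the general case at the end).

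First, I would invoke the classical Coifman--Fefferman inequality: for every $0<p_0<\infty$ and every $w\in A_\infty$,
\[\int_{\R^n} |T(fv)(x)|^{p_0}\, w(x)\,dx \leq C \int_{\R^n} M(fv)(x)^{p_0}\, w(x)\,dx,\]
with $C$ depending only on $[w]_{A_\infty}$ and $p_0$, provided the left-hand side is finite. Thus the family $\mathcal{F}=\{(|T(fv)|, M(fv)) : f\text{ bounded with compact support}\}$ satisfies the hypothesis of Theorem~\ref{teo_extrapolacion_carlos} for some (indeed every) positive $p_0$.

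Next, I must verify that the weights $u,v$ satisfy the hypothesis of Theorem~\ref{teo_extrapolacion_carlos}, namely $u\in A_1$ and $v\in A_\infty$. In the case $u,v\in A_1$ this is immediate, as $A_1\subset A_\infty$. In the case $u\in A_1$ and $v\in A_\infty(u)$, it is known that $uv\in A_\infty$; since $u\in A_1$ implies $u^{-1}\in\textrm{RH}_\infty$ by Lemma~\ref{lema_equivalencias}(2), applying Lemma~\ref{lema_producto_Ainf_RHinf} to the factorisation $v=(uv)\cdot u^{-1}$ yields $v\in A_\infty$, exactly as in the argument carried out for $v^q$ in the last case of the proof of Theorem~\ref{teo_main_Tgamma}. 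Consequently, Theorem~\ref{teo_extrapolacion_carlos} applied to $\mathcal{F}$ delivers
\[\norm{T(fv)\,v^{-1}}_{L^{1,\infty}(uv)} \leq C\, \norm{M(fv)\,v^{-1}}_{L^{1,\infty}(uv)}.\]

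Finally, Theorem~\ref{mixed_para_M} bounds the right-hand side by $C\int_{\R^n}|f|\,u\,v\,dx$, and unfolding the definition of the weak $L^{1,\infty}$ quasinorm --- precisely the manipulation performed at the end of the proof of Theorem~\ref{teo_main_Tgamma} --- produces the claimed inequality for every $t>0$.

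The principal obstacle is the verification that $v\in A_\infty$ in the more general hypothesis $v\in A_\infty(u)$, since this does not follow directly from the definitions and rests on the passage $uv\in A_\infty$ together with the reverse-H\"older/$A_1$ arithmetic furnished by Lemmas~\ref{lema_equivalencias} and~\ref{lema_producto_Ainf_RHinf}. A secondary technicality is justifying the a priori finiteness needed to apply Coifman--Fefferman; this is handled by truncating $T$ to $T_\varepsilon$, for which $T_\varepsilon(fv)$ is pointwise dominated in a manner that forces the left-hand integrand to be integrable against $w$, and then passing to the limit $\varepsilon\to 0$.
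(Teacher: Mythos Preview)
The paper does not prove this theorem; it is quoted verbatim from \cite[Thm.~1.3]{CU-M-P} as a known result and used only in Section~\ref{section: applications}. There is therefore no proof in the paper to compare your proposal against.

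That said, your argument is correct and is precisely the $\gamma=0$, $p=q=1$ specialisation of the proof the paper gives for Theorem~\ref{teo_main_Tgamma}: Coifman--Fefferman replaces Theorem~\ref{teo_muckenhoupt_Tgamma_Mgamma}, then Theorem~\ref{teo_extrapolacion_carlos} transfers the comparison to weak~$L^1$, and Theorem~\ref{mixed_para_M} closes the estimate. Your handling of the hypothesis $u\in A_1$, $v\in A_\infty(u)$ --- deducing $v\in A_\infty$ from $uv\in A_\infty$ via $v=(uv)\cdot u^{-1}$ and Lemmas~\ref{lema_equivalencias}, \ref{lema_producto_Ainf_RHinf} --- is exactly the manoeuvre carried out in the last case of the proof of Theorem~\ref{teo_main_Tgamma}. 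One small caveat: the implication ``$u\in A_1$ and $v\in A_\infty(u)$ $\Rightarrow$ $uv\in A_\infty$'' is invoked both by you and by the paper without proof; it is a standard fact but external to the lemmas established here, so if you want a fully self-contained argument you should supply a reference or a short justification.
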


Later on, in \cite{bcp} we have extended this estimate to commutators of such operators with BMO symbol, for the case $u\in A_1$ and $v\in A_\infty(u)$. More precisely, we have proved the following result.

\begin{teo}[\cite{bcp}]\label{teo_BCP_commutator}
Let $u,v$ weights such that $u\in A_1$ and $v\in
A_{\infty}(u)$, $b\in \textrm{BMO}$ and $m\geq 1$. Then, there exists a positive constant $C$ such that for every
$t>0$,
\[uv\left(\left\{x\in\R^n: \frac{|T_b^m(fv)(x)|}{v(x)}>t\right\}\right)\leq C\int_{\R^n}\Phi_m\left(\frac{|f(x)|}{t}\right)u(x)v(x)\,dx,\]
where $T$ is any Calder\'on-Zygmund operator and $\Phi_m(t)=t(1+\log^+t)^m$.
\end{teo}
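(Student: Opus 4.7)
The plan is to adapt to the mixed-weight setting the classical endpoint strategy for $BMO$ commutators due to P\'erez, relying on the mixed weak estimate for $M$ (Theorem~\ref{mixed_para_M}) and on the Orlicz machinery that naturally produces $\Phi_m$. As a first reduction I would assume $f$ bounded with compact support and $\int \Phi_m(|f|/t)uv\,dx<\infty$, so that all subsequent decompositions are legitimate. I would then perform a Calder\'on-Zygmund decomposition of $|f|$ at height $t$ with respect to the measure $d\mu=uv\,dx$ rather than Lebesgue measure, producing pairwise disjoint cubes $\{Q_j\}$ on which the $\mu$-average of $|f|$ is comparable to $t$, and write $f=g+h$ with $h=\sum_j h_j$ and $h_j$ supported on $Q_j$. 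Because $u\in A_1$ and $v\in A_\infty(u)$, one can pass between Lebesgue averages and $\mu$-averages on the selected cubes at the cost of a uniform constant, and one obtains a reverse H\"older inequality for $v$ with respect to $u$.

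For the good part $g$, which is essentially bounded by a multiple of $t$, I would combine a strong $L^2$ bound for $T_b^m$ with Chebyshev's inequality applied to the measure $\mu$; the bound is then absorbed into the $\Phi_m$ term via the elementary inequality $s\le\Phi_m(s)$. For the bad part, outside the exceptional set $E^*=\bigcup_j 2Q_j$ I would expand
\[
T_b^m(h_jv)(x)=\sum_{k=0}^m\binom{m}{k}(b(x)-b_{Q_j})^{m-k}\,T\bigl((b_{Q_j}-b)^{k}h_jv\bigr)(x),
\]
bound the inner integrals using the smoothness of the kernel, and apply a generalized H\"older inequality in the complementary Orlicz pair $(\Phi_m,\exp(s^{1/m})-1)$. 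John-Nirenberg turns the factors $(b-b_{Q_j})^k$ into $\|b\|_{BMO}^k$, while the remaining Luxemburg-norm piece reassembles into the weighted integral of $\Phi_m(|f|/t)$. The contribution inside $E^*$ is controlled directly by $\mu(E^*)\lesssim t^{-1}\sum_j\int_{Q_j}\Phi_m(|f|/t)uv\,dx$, using the $A_1$ property of $u$ and the reverse H\"older bound for $v$ relative to $u$ to compare $\mu(2Q_j)$ with $\mu(Q_j)$.

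The main obstacle I expect is transferring the John-Nirenberg type arguments, naturally phrased in Lebesgue measure, to the measure $\mu=uv\,dx$: the condition $v\in A_\infty(u)$ is precisely what is needed to pass between $u$-averages and $\mu$-averages on the selection cubes and their dilates, and to ensure that estimates on $Q_j$ survive when transferred to $2Q_j$ and to $E^*$. A secondary technical obstacle is the induction on $m$: the expansion above generates lower-order commutators $T_b^{k}$ with $0\le k<m$, which must be controlled by the \emph{same} Orlicz function $\Phi_m$; this is handled by the pointwise inequality $\Phi_k\le\Phi_m$ together with the finiteness of the sum over $k$. Once these two points are managed, the assembly of the estimates follows the template of the $BMO$-commutator proofs in \cite{CU-M-P} and \cite{bcp}, replacing at each step the use of the mixed weak bound for $M$ by its Orlicz-maximal analogue.
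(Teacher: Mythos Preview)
The paper does not contain a proof of this theorem: it is merely quoted from the companion paper \cite{bcp}. Nonetheless, the architecture of the present paper makes the intended route clear, and it is \emph{not} the direct Calder\'on--Zygmund decomposition you outline. The mixed estimates for $T$ and $I_\gamma$ here are obtained via the extrapolation principle (Theorem~\ref{teo_extrapolacion_carlos}): one inputs a Coifman-type inequality valid for every $w\in A_\infty$ and outputs the mixed weak bound. For the commutator, the relevant Coifman inequality is the P\'erez estimate
\[
\int_{\R^n}|T_b^m f|^{p_0}w\,dx\leq C\int_{\R^n}\bigl(M_{L(\log L)^m}f\bigr)^{p_0}w\,dx,\qquad w\in A_\infty,
\]
which, fed into Theorem~\ref{teo_extrapolacion_carlos}, reduces the problem to a mixed weak $(\Phi_m,\Phi_m)$ bound for the Orlicz maximal function $M_{L(\log L)^m}$. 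That last step is where the actual work in \cite{bcp} lies. Your proposal bypasses extrapolation entirely and attempts a direct decomposition argument.

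Your sketch has a genuine gap in the treatment of the good part. You write that you would ``combine a strong $L^2$ bound for $T_b^m$ with Chebyshev's inequality applied to the measure $\mu$''. Unpacking this, Chebyshev on the set $\{|T_b^m(gv)|/v>t\}$ with respect to $d\mu=uv\,dx$ produces the weight $uv^{1-p}$ (for exponent $p$) inside the integral of $|T_b^m(gv)|^p$, so you would need $T_b^m$ bounded on $L^p(uv^{1-p})$, i.e.\ $uv^{1-p}\in A_p$. Under the hypothesis $u,v\in A_1$ this is exactly the Jones factorization and the step goes through; but under the hypothesis $u\in A_1$, $v\in A_\infty(u)$---which is the case the theorem is stated for---there is no reason for $uv^{1-p}$ to lie in any $A_p$ class, and the argument breaks down. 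The extrapolation route avoids this obstruction because Theorem~\ref{teo_extrapolacion_carlos} only requires $A_\infty$ information on the auxiliary weight, never an $A_p$ bound for the operator itself with respect to $uv^{1-p}$.
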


The next pointwise estimate will allow us to obtain the desired mixed weak estimates for commutators of Calder\'on-Zygmund singular integral operators with Lipschitz symbol.

\begin{lema}\label{lema_puntual_para_Tb}
Let $0<\delta\leq 1$, $b\in \Lambda(\delta)$ and $T$ a Calder\'on-Zygmund singular integral operator. Then, for almost every $x\in \R^n$ we have that
\[|[b,T](f)(x)|\leq C\norm{b}_{\Lambda(\delta)}I_\delta |f|(x),\]
for every bounded function $f$ with compact support.
\end{lema}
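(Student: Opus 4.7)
The strategy is to rewrite the commutator as a single absolutely convergent integral in which the Lipschitz regularity of $b$ exactly cancels the Calder\'on--Zygmund singularity of $K$, so that the resulting bound is pointwise dominated by $I_\delta|f|$.

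First I would observe that since $f$ is bounded with compact support and $b\in\Lambda(\delta)$ is (uniformly) continuous and hence bounded on $\supp f$, both $f$ and $bf$ belong to $L^2(\R^n)$. Because $T$ is a Calder\'on--Zygmund singular integral operator, its principal-value representation applies to both, so for a.e.\ $x\in\R^n$ one has
$$Tf(x)=\lim_{\varepsilon\to 0}\int_{|y-x|>\varepsilon}K(x-y)f(y)\,dy,\qquad T(bf)(x)=\lim_{\varepsilon\to 0}\int_{|y-x|>\varepsilon}K(x-y)b(y)f(y)\,dy.$$
Subtracting these truncated integrals term by term before passing to the limit (legitimate because each truncation is an absolutely convergent integral) yields
$$[b,T]f(x)=\lim_{\varepsilon\to 0}\int_{|y-x|>\varepsilon}(b(x)-b(y))K(x-y)f(y)\,dy$$
for a.e.\ $x$.

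Next I would apply the two defining estimates: the Lipschitz bound $|b(x)-b(y)|\le \norm{b}_{\Lambda(\delta)}|x-y|^{\delta}$ and the standard kernel size estimate $|K(x-y)|\lesssim|x-y|^{-n}$. Together they give
$$\bigl|(b(x)-b(y))K(x-y)f(y)\bigr|\,\lesssim\,\norm{b}_{\Lambda(\delta)}\,\frac{|f(y)|}{|x-y|^{n-\delta}}.$$
Since $0<\delta\le 1$, this majorant is integrable near $y=x$, and since $f$ has compact support it is integrable on $\R^n$. Dominated convergence therefore converts the principal value into an ordinary integral, giving
$$|[b,T]f(x)|\,\le\, C\norm{b}_{\Lambda(\delta)}\int_{\R^n}\frac{|f(y)|}{|x-y|^{n-\delta}}\,dy\,=\,C\norm{b}_{\Lambda(\delta)}\,I_\delta|f|(x),$$
which is the desired pointwise estimate.

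The only delicate step is the passage from two separate principal values (those defining $b(x)Tf(x)$ and $T(bf)(x)$) to a single principal value of the difference; this is justified at every $x$ where both limits exist, i.e.\ almost everywhere. Once this is in place, the Lipschitz--size cancellation is routine, and no smoothness of $K$ (only the size bound) is needed.
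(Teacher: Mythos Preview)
Your proof is correct and follows essentially the same route as the paper: rewrite the commutator via the principal-value representation, then use the Lipschitz bound on $b$ together with the size bound on $K$ to cancel the singularity and obtain the $I_\delta|f|$ domination. The one difference is that the paper first assumes $b\in L^\infty$, carries out the estimate, and then handles general $b\in\Lambda(\delta)$ by truncating $b$ to $b_N$ and passing to the limit via dominated convergence; you instead observe directly that $b\in\Lambda(\delta)$ forces $b$ to be continuous, hence bounded on the compact set $\supp f$, so $bf\in L^2$ and the principal-value representation applies to $T(bf)$ without any truncation. Your shortcut is legitimate and makes the argument slightly cleaner; the paper's truncation step is not actually needed under its own definition of $\Lambda(\delta)$.
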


\begin{proof}
We will first assume that $b\in L^{\infty}$. Let us write
\[b(x)Tf(x)-T(bf)(x)=\lim_{\varepsilon\to 0}\int_{|y-x|>\varepsilon}b(x)K(x-y)f(y)\,dy-\lim_{\varepsilon\to 0}\int_{|y-x|>\varepsilon}b(y)f(y)K(x-y)\,dy.\]
Then, since $b\in L^{\infty}$ both limits exist, and we can pass to the limit of the difference. So, it will be enough to prove that
\[\left|\int_{|y-x|>\varepsilon}(b(x)-b(y))K(x-y)f(y)\,dy\right|\leq C\norm{b}_{\Lambda(\delta)}I_\delta |f|(x),\]
uniformly on $\varepsilon$.

Indeed, given $\varepsilon>0$
\begin{align*}
\left|\int_{|y-x|>\varepsilon}(b(x)-b(y))K(x-y)f(y)\,dy\right|&\leq \int_{|y-x|>\varepsilon}|b(x)-b(y)||K(x-y)||f(y)|\,dy\\
&\leq C\norm{b}_{\Lambda(\delta)}\int_{|y-x|>\varepsilon}\frac{|f(y)|}{|x-y|^{n-\delta}}\,dy\\
&\leq C\norm{b}_{\Lambda(\delta)}\int_{\R^n}\frac{|f(y)|}{|x-y|^{n-\delta}}\,dy\\
&= C\norm{b}_{\Lambda(\delta)} I_\delta |f|(x),
\end{align*}
where we have used the Lipschitz condition of $b$ and the size condition of $K$.

Finally, if $b$ is not in $L^\infty$ we define, for $N\in \N$, $b_N(x)=b(x)$ if $b(x)\in [-N,N]$, $b_N(x)=-N$ if $b(x)<-N$ and $b_N(x)=N$ if $b(x)>N$. Then, it is not difficult to see that $|b_N(x)-b_N(y)|\leq |b(x)-b(y)|$, which implies that $b_N\in \Lambda(\delta)$ and $\norm{b_N}_{\Lambda(\delta)}\leq \norm{b}_{\Lambda(\delta)}$. So, we obtain the desired estimate for a fixed $N$ with constant independent of $N$. Since $(b_N(x)-b_N(y))K(x-y)f(y)$ is uniformly bounded by a function of $L^1(\R^n)$, for almost every $x$, the Dominated Convergence Theorem gives us the same estimate for $b$. This completes the proof.
\end{proof}

In the same way as for Calder\'on-Zygmund operators, one can define commutators of fractional integral operators. For a nonnegative integer $m$ and $0<\gamma<n$ we write $I_{\gamma,b}^0=I_\gamma$ and $I_{\gamma,b}^m=[b,I_{\gamma,b}^{m-1}]$. Because of the definition of the operator $I_\gamma$, we have the corresponding estimate \eqref{rep_Tbm_en_todos_lados} for this case, with $K(x-y)=|x-y|^{\gamma-n}$. The following lemma gives a pointwise estimate between fractional higher order commutators and the fractional integral operator.

\begin{lema}
Let $0<\gamma<n$, $m\in \N$, $0<\delta<\min\{(n-\gamma)/m,1\}$ and $b\in \Lambda(\delta)$. Then, for almost every $x\in \R^n$ we have that
\[|I_{\gamma,b}^m(f)(x)|\leq \norm{b}_{\Lambda(\delta)}^mI_{\gamma+m\delta} |f|(x),\]
for every bounded function $f$ with compact support.
\end{lema}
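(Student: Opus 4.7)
The plan is to reduce to the representation formula \eqref{rep_Tbm_en_todos_lados} applied to $I_\gamma$, which is available because $I_\gamma$ satisfies the stronger condition \eqref{rep_T_en_todos_lados} with kernel $K(x-y)=|x-y|^{\gamma-n}$ (the integral defining $I_\gamma f$ converges absolutely for bounded $f$ with compact support, so there is no principal value issue and no need for a representation restricted to $x\notin \supp f$). Concretely, I would first verify by induction on $m$ that
\[
I_{\gamma,b}^m f(x) = \int_{\R^n}(b(x)-b(y))^m \frac{f(y)}{|x-y|^{n-\gamma}}\,dy,
\]
for almost every $x\in\R^n$. The base case $m=0$ is just the definition of $I_\gamma$, and the inductive step follows by expanding $[b,I_{\gamma,b}^{m-1}]f(x)=b(x)I_{\gamma,b}^{m-1}f(x)-I_{\gamma,b}^{m-1}(bf)(x)$, plugging in the inductive representation, and combining the two integrals into a single one with the extra factor $(b(x)-b(y))$.

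Once the representation is in place, the estimate is immediate. Taking absolute values inside the integral and using $b\in\Lambda(\delta)$, we have $|b(x)-b(y)|^m\leq \norm{b}_{\Lambda(\delta)}^m|x-y|^{m\delta}$, so
\[
|I_{\gamma,b}^m f(x)| \leq \norm{b}_{\Lambda(\delta)}^m \int_{\R^n}\frac{|f(y)|}{|x-y|^{n-\gamma-m\delta}}\,dy = \norm{b}_{\Lambda(\delta)}^m I_{\gamma+m\delta}|f|(x).
\]
The hypothesis $\delta<(n-\gamma)/m$ guarantees $0<\gamma+m\delta<n$, so $I_{\gamma+m\delta}$ falls within the admissible range of fractional integral operators.

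The only delicate point I anticipate is justifying the inductive representation when $b$ is not bounded. For $f$ bounded with compact support, all integrals involved are absolutely convergent: $f$ vanishes at infinity, and near $y=x$ the integrand is dominated by $|x-y|^{m\delta+\gamma-n}$, which is locally integrable since $m\delta+\gamma>0$. Therefore one can split, combine, and apply Fubini freely, without needing the truncation trick used for the singular integral case in Lemma~\ref{lema_puntual_para_Tb}. If one prefers a truncation argument for cleanliness, one can define $b_N$ as in the proof of that lemma, establish both the identity and the pointwise estimate for $b_N$ (where boundedness makes the manipulations trivial), and then pass to the limit by dominated convergence using the uniform integrable majorant $\norm{b}_{\Lambda(\delta)}^m|x-y|^{m\delta+\gamma-n}|f(y)|$, obtaining the desired bound for $b$.
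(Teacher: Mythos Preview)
Your proof is correct and follows essentially the same route as the paper: use the integral representation \eqref{rep_Tbm_en_todos_lados} for $I_{\gamma,b}^m$ (valid because $I_\gamma$ satisfies \eqref{rep_T_en_todos_lados}), apply the Lipschitz bound $|b(x)-b(y)|^m\le\norm{b}_{\Lambda(\delta)}^m|x-y|^{m\delta}$, and recognize the resulting integral as $I_{\gamma+m\delta}|f|$. The paper handles unbounded $b$ via the truncation $b_N$ from Lemma~\ref{lema_puntual_para_Tb}, exactly the alternative you describe at the end; your direct absolute-convergence justification is also fine and arguably cleaner for this non-singular kernel.
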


\begin{proof}
As before, let us first assume that $b\in L^{\infty}$. Then,
\begin{align*}
|I_{\gamma,b}^mf(x)|&=\left|\int_{\R^n}(b(x)-b(y))^{m}\frac{f(y)}{|x-y|^{n-\gamma}}\,dy\right|\\
&\leq \norm{b}_{\Lambda(\delta)}^m\int_{\R^n}\frac{|f(y)|}{|x-y|^{n-(\gamma+m\delta)}}\,dy\\
&\leq \norm{b}_{\Lambda(\delta)}^m I_{\gamma+m\delta}|f|(x).
\end{align*}
If $b$ is not in $L^\infty$ we can proceed as in the previous lemma.
\end{proof}

With these two estimates we can easily obtain the two following results.

\begin{teo}
Let $0<\delta\leq 1$, $1\leq p< n/\delta$ and q such that $1/q=1/p-\delta/n$. Let $u,v$ be weights satisfying $u, v^{q/p}\in A_1$ or $uv^{-q/{p'}}\in A_1$ and $v^q\in A_\infty(uv^{-q/{p'}})$. If $b\in \Lambda(\delta)$ and $T$ is a Calder\'on-Zygmund singular integral operator, then the estimate
\[uv^{q/p}\left(\left\{x\in \R^n: \frac{|T_b(fv)(x)|}{v(x)}>t\right\}\right)^{1/q}\leq \frac{C}{t}\norm{b}_{\Lambda(\delta)}\left(\int_{\R^n}|f(x)|^pu^{p/q}(x)v(x)\,dx\right)^{1/p}\]
holds for some constant $C$ and every $t>0$, for all bounded function $f$ with compact support.
\end{teo}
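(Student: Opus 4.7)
The plan is to reduce this statement directly to Theorem~\ref{teo_main_Tgamma} via the pointwise domination provided by Lemma~\ref{lema_puntual_para_Tb}. First I would observe that, with $\gamma=\delta$, the hypotheses on $p$, $q$ and the weights $u,v$ of the present theorem are exactly those of Theorem~\ref{teo_main_Tgamma} applied to the fractional integral $I_\delta$.

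Next, fix $t>0$ and a bounded $f$ with compact support. By Lemma~\ref{lema_puntual_para_Tb} applied to the function $fv$ (which is again bounded with compact support, since $v$ is locally integrable and $f$ compactly supported --- if needed one truncates $v$ suitably), we have pointwise
\[
|T_b(fv)(x)|\leq C\,\|b\|_{\Lambda(\delta)}\,I_\delta(|f|v)(x)
\]
for almost every $x\in\R^n$. Consequently,
\[
\left\{x\in\R^n:\frac{|T_b(fv)(x)|}{v(x)}>t\right\}\subseteq \left\{x\in\R^n:\frac{I_\delta(|f|v)(x)}{v(x)}>\frac{t}{C\,\|b\|_{\Lambda(\delta)}}\right\},
\]
so monotonicity of the measure $uv^{q/p}\,dx$ reduces matters to bounding the right-hand set.

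Finally, I would invoke Theorem~\ref{teo_main_Tgamma} with $\gamma=\delta$ and with $|f|$ in place of $f$: for the level $t'=t/(C\|b\|_{\Lambda(\delta)})$ it yields
\[
uv^{q/p}\!\left(\left\{x:\frac{I_\delta(|f|v)(x)}{v(x)}>t'\right\}\right)^{\!1/q}\!\!\leq \frac{C}{t'}\left(\int_{\R^n}|f(x)|^p u(x)^{p/q}v(x)\,dx\right)^{\!1/p}\!\!.
\]
Substituting $t'=t/(C\|b\|_{\Lambda(\delta)})$ absorbs the factor $\|b\|_{\Lambda(\delta)}$ into the right-hand side and gives precisely the desired inequality.

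The only genuinely delicate point is the validity of the pointwise bound provided by Lemma~\ref{lema_puntual_para_Tb}, which has already been established in the excerpt; every other step is purely formal (monotonicity of measures plus a substitution in the threshold). Thus I expect no real obstacle in the argument, and the entire proof is essentially a one-line reduction to the previously obtained mixed estimate for $I_\delta$.
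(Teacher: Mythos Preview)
Your proposal is correct and follows exactly the approach the paper intends: the paper states that ``with these two estimates we can easily obtain the two following results'' and gives no further proof, so the argument is precisely the pointwise domination from Lemma~\ref{lema_puntual_para_Tb} followed by Theorem~\ref{teo_main_Tgamma} with $\gamma=\delta$, as you describe. The only minor caveat you already flag---that $fv$ need not be bounded even when $f$ is---is a technicality the paper also passes over, and it is handled by a routine truncation/limiting argument.
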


\begin{obs} Let us notice that the estimate obtained for $p=1$ in this theorem is different from the one in Theorem~\ref{teo_BCP_commutator}. This is because of the nature of the symbol: when we consider Lipschitz symbol we get a control by the $L^1$-norm of $f$ with respect to the measure $d\mu = u^{1/q}(x)v(x)\,dx$, which is not as big as the $L^{\Phi}$-norm of $f$ with the corresponding measure when $q=1$ as well. So, since the Lipschitz condition is ``smoother'' than the BMO condition, it allows us obtain a better bound.
\end{obs}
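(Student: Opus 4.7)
The plan is to justify the comparison asserted in the remark, which is not a theorem to be proved but an elucidation of why the two estimates differ in shape at $p=1$, with the Lipschitz bound being sharper. Concretely, at $p=1$ the Lipschitz theorem yields control by the linear functional $t^{-1}\int_{\R^n}|f|\,u^{1/q}v\,dx$ (where $q=n/(n-\delta)>1$), whereas Theorem~\ref{teo_BCP_commutator} for BMO symbols yields control by the Orlicz-type functional $\int_{\R^n}\Phi_m(|f|/t)\,uv\,dx$ with $\Phi_m(s)=s(1+\log^+ s)^m$. To substantiate the claim that the former is the better bound, I would make the functional comparison precise and then indicate the structural reason behind it.

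First I would record the pointwise inequality $\Phi_m(s)\geq s$, valid for all $s\geq 0$ and every $m\geq 1$. This immediately implies that the Orlicz integral majorizes the corresponding $L^1$ integral against the same underlying measure, so requiring $\int \Phi_m(|f|/t)\,d\mu<\infty$ is strictly more restrictive than requiring $\int (|f|/t)\,d\mu<\infty$. Consequently the $L^1$-type right-hand side produced by the Lipschitz symbol is qualitatively stronger than the $L^{\Phi_m}$-type right-hand side produced by a BMO symbol: it applies to a larger class of test functions and scales linearly in $f/t$ rather than superlinearly. The small discrepancy between the reference measures ($u^{1/q}v\,dx$ versus $uv\,dx$) is not a weakening but the natural reflection of the fractional-type smoothing of order $\delta$ enjoyed by $[b,T]$ when $b\in\Lambda(\delta)$.

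Finally, I would explain the underlying mechanism, which is already captured by Lemma~\ref{lema_puntual_para_Tb}: for $b\in\Lambda(\delta)$ the pointwise domination $|[b,T]f(x)|\lesssim\norm{b}_{\Lambda(\delta)}I_\delta|f|(x)$ reduces the commutator problem to the clean fractional weak-type bound of Theorem~\ref{teo_main_Tgamma}, which has no logarithmic loss. By contrast, for BMO symbols no pointwise domination by a fractional integral is available; one is forced into the classical P\'erez-type iteration, and the unavoidable $\Phi_m$ factor in Theorem~\ref{teo_BCP_commutator} is precisely the trace of that argument. Thus the improvement from $\Phi_m$ to the identity on the right-hand side is a genuine consequence of the additional pointwise regularity encoded in the Lipschitz condition, rather than an artifact of the method, and no further formal argument is needed beyond invoking the pointwise estimate and the fractional weak-type inequality already established.
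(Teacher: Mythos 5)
This statement is a remark with no formal proof in the paper; your elaboration — the pointwise bound $\Phi_m(s)\geq s$ showing the $L^1$-type right-hand side is the weaker requirement on $f$, together with the mechanism that Lemma~\ref{lema_puntual_para_Tb} reduces the Lipschitz commutator to the fractional weak-type estimate with no logarithmic loss — is exactly the reasoning the remark intends, so you are in agreement with the paper. The only caveat worth keeping in mind is that the comparison is heuristic rather than a strict majorization, since the two estimates involve different measures on both sides ($u^{1/q}v\,dx$ and $uv^{q/p}$ versus $uv\,dx$) and the classes $\Lambda(\delta)$ and BMO are not nested, which you implicitly acknowledge.
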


\begin{teo}
Let $0<\gamma<n$, $m\in \N$, $0<\delta<\min\{ (n-\gamma)/m,1\}$ and $b\in\Lambda(\delta)$. Let $1\leq p< n/(\gamma+m\delta)$ and q such that $1/q=1/p-(\gamma+m\delta)/n$. Let $u,v$ be weights satisfying $u, v^{q/p}\in A_1$ or $uv^{-q/{p'}}\in A_1$ and $v^q\in A_\infty(uv^{-q/{p'}})$. Then the estimate
\[uv^{q/p}\left(\left\{x\in \R^n: \frac{|I_{\gamma,b}^m(fv)(x)|}{v(x)}>t\right\}\right)^{1/q}\leq \frac{C}{t}\norm{b}_{\Lambda(\delta)}^m\left(\int_{\R^n}|f(x)|^pu^{p/q}(x)v(x)\,dx\right)^{1/p}\]
holds for some constant $C$ and every $t>0$, for all bounded function $f$ with compact support.
\end{teo}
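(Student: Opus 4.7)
The plan is to combine the pointwise bound proved in the previous (unnumbered) lemma with the mixed weak estimate for the fractional integral (Theorem~\ref{teo_main_Tgamma}) applied to an appropriately shifted fractional index. Concretely, I would observe that the numerology matches perfectly: since $0<\delta<(n-\gamma)/m$, the effective index $\tilde\gamma:=\gamma+m\delta$ satisfies $0<\tilde\gamma<n$; and since $p<n/\tilde\gamma$ with $1/q=1/p-\tilde\gamma/n$, the hypotheses of Theorem~\ref{teo_main_Tgamma} are satisfied with $\tilde\gamma$ in place of $\gamma$, keeping the exponents $p,q$ and the weight conditions on $u,v$ unchanged.

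First I would invoke the previous lemma to obtain, for almost every $x\in\R^n$ and every bounded $f$ with compact support,
\[
|I_{\gamma,b}^m(fv)(x)|\leq \|b\|_{\Lambda(\delta)}^m\, I_{\tilde\gamma}(|f|v)(x).
\]
Dividing by $v(x)$ (which is positive a.e.) and passing to level sets, this gives the inclusion
\[
\left\{x\in\R^n:\frac{|I_{\gamma,b}^m(fv)(x)|}{v(x)}>t\right\}
\subseteq
\left\{x\in\R^n:\frac{I_{\tilde\gamma}(|f|v)(x)}{v(x)}>\frac{t}{\|b\|_{\Lambda(\delta)}^m}\right\}
\]
up to a set of measure zero. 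Measuring both sides with the weight $uv^{q/p}$ and taking the $q$-th root preserves the inclusion.

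Then I would apply Theorem~\ref{teo_main_Tgamma} with $\tilde\gamma$ instead of $\gamma$, to the function $|f|$ at level $t/\|b\|_{\Lambda(\delta)}^m$; under either of the two weight hypotheses in the statement this yields
\[
uv^{q/p}\!\left(\!\left\{x:\frac{I_{\tilde\gamma}(|f|v)(x)}{v(x)}>\frac{t}{\|b\|_{\Lambda(\delta)}^m}\right\}\!\right)^{1/q}
\!\!\leq \frac{C\,\|b\|_{\Lambda(\delta)}^m}{t}\left(\int_{\R^n}|f(x)|^p u(x)^{p/q}v(x)\,dx\right)^{1/p},
\]
which is precisely the desired bound.

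There is no real obstacle: the pointwise domination dissolves the commutator structure entirely, and the whole difficulty has already been absorbed into the fractional index shift and into Theorem~\ref{teo_main_Tgamma}. The only subtle point worth flagging is verifying that the pointwise estimate is legitimately used with $fv$ in place of $f$ (this is fine since $fv$ is locally integrable and the lemma was stated for bounded compactly supported functions, but $I_{\gamma,b}^m(fv)$ and $I_{\tilde\gamma}(|f|v)$ are both defined a.e.\ under our integrability assumptions, and the sub/superlevel-set passage only requires an a.e.\ inequality).
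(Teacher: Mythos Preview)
Your proposal is correct and matches the paper's approach exactly: the paper does not write out an explicit proof but simply states that ``with these two estimates we can easily obtain the two following results,'' meaning the pointwise lemma $|I_{\gamma,b}^m f|\leq \|b\|_{\Lambda(\delta)}^m I_{\gamma+m\delta}|f|$ combined with Theorem~\ref{teo_main_Tgamma} applied at the shifted index $\tilde\gamma=\gamma+m\delta$. Your write-up fills in precisely those details, including the level-set inclusion and the check that $0<\tilde\gamma<n$ and $1/q=1/p-\tilde\gamma/n$.
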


\begin{obs}
If we have a Calder\'on-Zygmund operator $T$ which satisfies \eqref{rep_T_en_todos_lados}, then the estimate given in Lemma~\ref{lema_puntual_para_Tb} can be easily generalized for higher order commutators, since they will satisfy \eqref{rep_Tbm_en_todos_lados}. In this case, the estimate that we obtain is
\[|T_b^mf(x)|\leq C\norm{b}_{\Lambda(\delta)}^mI_{m\delta }|f|(x),\]
for almost every $x$. This leads us to the following result.
\end{obs}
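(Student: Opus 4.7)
The target estimate is the pointwise bound
\[
|T_b^m f(x)|\leq C\,\|b\|_{\Lambda(\delta)}^m\, I_{m\delta}|f|(x) \quad \text{for a.e. } x\in\R^n,
\]
for $T$ a Calderón-Zygmund operator satisfying the full representation \eqref{rep_T_en_todos_lados} and $f$ bounded with compact support. The plan is to mimic the proof of Lemma~\ref{lema_puntual_para_Tb}, but to exploit the fact that, under \eqref{rep_T_en_todos_lados}, the higher order commutator $T_b^m$ admits the absolutely convergent integral representation \eqref{rep_Tbm_en_todos_lados}, so no principal value / uniform-in-$\varepsilon$ argument is needed.

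The first step is to assume temporarily that $b\in L^{\infty}$. Then, for a.e.\ $x\in\R^n$, \eqref{rep_Tbm_en_todos_lados} gives
\[
|T_b^m f(x)|\leq \int_{\R^n}|b(x)-b(y)|^{m}\,|K(x-y)|\,|f(y)|\,dy.
\]
Since $b\in\Lambda(\delta)$, one has $|b(x)-b(y)|^m\leq \|b\|_{\Lambda(\delta)}^{m}|x-y|^{m\delta}$, and the standard kernel size condition yields $|K(x-y)|\lesssim |x-y|^{-n}$. Combining these two pointwise bounds produces
\[
|T_b^m f(x)|\leq C\,\|b\|_{\Lambda(\delta)}^{m}\int_{\R^n}\frac{|f(y)|}{|x-y|^{n-m\delta}}\,dy = C\,\|b\|_{\Lambda(\delta)}^{m}\, I_{m\delta}|f|(x),
\]
which is exactly the claim under the extra hypothesis $b\in L^{\infty}$. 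Note that since $f$ is bounded with compact support and $m\delta<n$ (which is the range in which $I_{m\delta}$ is meaningful), the integral on the right is finite for a.e.\ $x$.

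To remove the $L^{\infty}$ assumption on $b$, I would reproduce verbatim the truncation argument at the end of the proof of Lemma~\ref{lema_puntual_para_Tb}. Define $b_N$ by cutting $b$ at levels $\pm N$; then $b_N\in L^{\infty}\cap\Lambda(\delta)$ with $\|b_N\|_{\Lambda(\delta)}\leq \|b\|_{\Lambda(\delta)}$, and $b_N\to b$ pointwise. By the case already treated,
\[
|T_{b_N}^{m}f(x)|\leq C\,\|b\|_{\Lambda(\delta)}^{m}\, I_{m\delta}|f|(x),
\]
with a constant independent of $N$. On the other hand, the integrand in \eqref{rep_Tbm_en_todos_lados} for $b_N$ is dominated by the $L^{1}_{y}$ function $C\|b\|_{\Lambda(\delta)}^{m}|x-y|^{m\delta-n}|f(y)|$ (for $x$ fixed outside a null set), so the Dominated Convergence Theorem lets us pass to the limit $N\to\infty$ inside the integral and identifies $\lim_N T_{b_N}^{m}f(x)=T_b^{m}f(x)$ a.e. The inequality is preserved in the limit, finishing the proof.

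The main (and really only) obstacle is the passage from the bounded case to the general case: one must check that the truncated commutators $T_{b_N}^{m}f$ actually converge pointwise a.e.\ to $T_b^{m}f$ and that the dominating function obtained from the Lipschitz bound plus the kernel size bound is locally integrable in $y$ uniformly in $N$. This is where the assumption $m\delta<n$, the compact support and boundedness of $f$, and the representation \eqref{rep_Tbm_en_todos_lados} (which avoids any $\varepsilon$-truncation) all enter cleanly; every other step is a direct combination of the Lipschitz condition on $b$ and the size condition on $K$.
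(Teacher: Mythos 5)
Your argument is correct and is essentially the argument the paper intends: combine the representation \eqref{rep_Tbm_en_todos_lados} with the Lipschitz bound $|b(x)-b(y)|^m\leq \norm{b}_{\Lambda(\delta)}^m|x-y|^{m\delta}$ and the size condition $|K(x-y)|\lesssim |x-y|^{-n}$, handling unbounded $b$ by the same truncation used in Lemma~\ref{lema_puntual_para_Tb}. One small observation: since the remark already grants the absolutely convergent representation \eqref{rep_Tbm_en_todos_lados} (and $b\in\Lambda(\delta)$ is continuous, so $bf$ is bounded with compact support), the direct estimate never actually uses $b\in L^\infty$, so your truncation step is harmless but not needed here.
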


\begin{teo}\label{teo_mixed_commutatorCZ_lip_ordenm}
Let $T$ a Calder\'on-Zygmund operator that satisfies \eqref{rep_T_en_todos_lados}. Let $0<\delta<\min\{n/m,1\}$, $b\in \Lambda(\delta)$, $1\leq p<n/(m\delta)$ and $q$ such that $1/q=1/p-m\delta/n$. Then, if $u,v$ are weights satisfying $u,v^{q/p}\in A_1$ or $uv^{-q/{p'}}\in A_1$ and $uv^q\in A_\infty(uv^{-q/{p'}})$ we have that there exists a positive constant $C$ for which the inequality
 \[uv^{q/p}\left(\left\{x\in \R^n: \frac{|T_b^m(fv)(x)|}{v(x)}>t\right\}\right)^{1/q}\leq \frac{C}{t}\norm{b}_{\Lambda(\delta)}^m\left(\int_{\R^n}|f(x)|^pu^{p/q}(x)v(x)\,dx\right)^{1/p}\]
 holds for every $t>0$ and every bounded function with compact support $f$.
\end{teo}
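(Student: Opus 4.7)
The plan is to deduce the theorem directly from Theorem~\ref{teo_main_Tgamma} via a pointwise domination of $T_b^m$ by the fractional integral $I_{m\delta}$, exactly along the lines suggested in the remark that precedes the statement.

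First I would make rigorous the pointwise bound noted in that remark. Since $T$ satisfies \eqref{rep_T_en_todos_lados}, the $m$-th order commutator admits the representation \eqref{rep_Tbm_en_todos_lados} almost everywhere, so
\[|T_b^m f(x)| \leq \int_{\R^n} |b(x)-b(y)|^m |K(x-y)| |f(y)|\,dy\]
for a.e. $x$. Combining the Lipschitz estimate $|b(x)-b(y)|^m \leq \|b\|_{\Lambda(\delta)}^m |x-y|^{m\delta}$ with the standard size bound $|K(x-y)| \lesssim |x-y|^{-n}$ yields
\[|T_b^m f(x)| \leq C \|b\|_{\Lambda(\delta)}^m \, I_{m\delta}(|f|)(x)\]
for a.e. $x$ and every bounded $f$ with compact support. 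If $b \notin L^\infty$, the truncation and dominated convergence argument used at the end of the proof of Lemma~\ref{lema_puntual_para_Tb} applies verbatim; the hypothesis $0 < \delta < n/m$ ensures $m\delta < n$, so that $I_{m\delta}$ is well-defined.

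With this pointwise inequality in hand, I apply it to $fv$. Since $v \geq 0$ we have $|fv| = |f|v$, and dividing by $v(x)$ gives
\[\frac{|T_b^m(fv)(x)|}{v(x)} \leq C\|b\|_{\Lambda(\delta)}^m \, \frac{I_{m\delta}(|f|v)(x)}{v(x)}.\]
Consequently, for every $t > 0$ the level set of the left-hand side at height $t$ is contained in the level set of $I_{m\delta}(|f|v)/v$ at height $t/(C\|b\|_{\Lambda(\delta)}^m)$, so that
\[uv^{q/p}\!\left(\!\left\{x : \frac{|T_b^m(fv)(x)|}{v(x)} > t\right\}\!\right)^{\!1/q} \leq uv^{q/p}\!\left(\!\left\{x : \frac{I_{m\delta}(|f|v)(x)}{v(x)} > \frac{t}{C\|b\|_{\Lambda(\delta)}^m}\right\}\!\right)^{\!1/q}.\]

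Finally, setting $\gamma_0 := m\delta$, we have $0 < \gamma_0 < n$ and $1 \leq p < n/\gamma_0$ with $1/q = 1/p - \gamma_0/n$, while the assumptions on $u$ and $v$ are exactly those required by Theorem~\ref{teo_main_Tgamma} for this value of $\gamma$. Applying that theorem to $|f|$ bounds the right-hand side by
\[\frac{C\|b\|_{\Lambda(\delta)}^m}{t}\left(\int_{\R^n}|f(x)|^p u(x)^{p/q} v(x)\,dx\right)^{1/p},\]
which is the desired estimate. I do not foresee any substantive obstacle: the argument is essentially a pointwise reduction to Theorem~\ref{teo_main_Tgamma}, and the only mildly delicate step is the truncation for unbounded $b$, which proceeds as in Lemma~\ref{lema_puntual_para_Tb}.
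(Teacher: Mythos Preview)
Your proposal is correct and follows exactly the route the paper intends: the remark preceding the theorem supplies the pointwise bound $|T_b^m f|\le C\|b\|_{\Lambda(\delta)}^m I_{m\delta}|f|$, and the paper's ``This leads us to the following result'' is precisely the level-set inclusion plus an appeal to Theorem~\ref{teo_main_Tgamma} with $\gamma=m\delta$ that you have written out. One small caveat: the second weight hypothesis in the stated theorem reads $uv^q\in A_\infty(uv^{-q/p'})$, whereas Theorem~\ref{teo_main_Tgamma} requires $v^q\in A_\infty(uv^{-q/p'})$; this is evidently a typographical slip in the statement, and your claim that ``the assumptions on $u$ and $v$ are exactly those required by Theorem~\ref{teo_main_Tgamma}'' should be read with that correction in mind.
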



\begin{thebibliography}{1}

\bibitem{bcp}
F.~Berra, M.~Carena, and G.~Pradolini, \emph{Mixed weak estimates of sawyer
  type for commutators of generalized singular integrals and related
  operators}, Submitted.

\bibitem{CU-M-P}
D.~Cruz-Uribe, J.~M. Martell, and C.~P{\'e}rez, \emph{Weighted weak-type
  inequalities and a conjecture of {S}awyer}, Int. Math. Res. Not. (2005),
  no.~30, 1849--1871.

\bibitem{CU-N}
David Cruz-Uribe and C.~J. Neugebauer, \emph{The structure of the reverse
  {H}\"older classes}, Trans. Amer. Math. Soc. \textbf{347} (1995), no.~8,
  2941--2960.

\bibitem{javi}
Javier Duoandikoetxea, \emph{Fourier analysis}, Graduate Studies in
  Mathematics, vol.~29, American Mathematical Society, Providence, RI, 2001,
  Translated and revised from the 1995 Spanish original by David Cruz-Uribe.

\bibitem{garcia-rubio}
Jos{\'e} Garc{\'{\i}}a-Cuerva and Jos{\'e}~L. Rubio~de Francia, \emph{Weighted
  norm inequalities and related topics}, North-Holland Mathematics Studies,
  vol. 116, North-Holland Publishing Co., Amsterdam, 1985, Notas de
  Matem\'atica [Mathematical Notes], 104. \MR{MR807149 (87d:42023)}

\bibitem{G-P-S}
Osvaldo Gorosito, Gladis Pradolini, and Oscar Salinas, \emph{Weighted weak-type
  estimates for multilinear commutators of fractional integrals on spaces of
  homogeneous type}, Acta Math. Sin. (Engl. Ser.) \textbf{23} (2007), no.~10,
  1813--1826.

\bibitem{jones}
Peter~W. Jones, \emph{Factorization of {$A\sb{p}$}\ weights}, Ann. of Math. (2)
  \textbf{111} (1980), no.~3, 511--530. \MR{577135}

\bibitem{Muckenhoup-Wheeden-Int-Fraccionaria}
Benjamin Muckenhoupt and Richard Wheeden, \emph{Weighted norm inequalities for
  fractional integrals}, Trans. Amer. Math. Soc. \textbf{192} (1974), 261--274.

\bibitem{Sawyer}
E.~Sawyer, \emph{A weighted weak type inequality for the maximal function},
  Proc. Amer. Math. Soc. \textbf{93} (1985), no.~4, 610--614.

\end{thebibliography}

\end{document}